\newtheorem{definition}{Definition}[section]
\newtheorem{lemma}[definition]{Lemma}
\newtheorem{proposition}[definition]{Proposition}
\newtheorem{corollary}[definition]{Corollary}
\newtheorem{remark}[definition]{Remark}
\def\rawo\lonra{\longrightarrow}
\def\ot{\otimes}
\newenvironment{proof}{{\it Proof.}}{\hfill $ \square $ \vskip 4mm}
\begin{document}

\title{BiHom-pre-Lie algebras, BiHom-Leibniz algebras and Rota-Baxter operators on BiHom-Lie algebras
}
\author{Ling Liu \\
College of Mathematics and Computer Science,\\
Zhejiang Normal University, \\
Jinhua 321004, China \\
e-mail: ntliulin@zjnu.cn \and Abdenacer Makhlouf \\
Universit\'{e} de Haute Alsace, \\
IRIMAS-D\'epartement  de Math\'{e}matiques,  \\
6 bis, rue des fr\`{e}res Lumi\`{e}re, F-68093 Mulhouse, France\\
e-mail: Abdenacer.Makhlouf@uha.fr \and Claudia Menini \\
University of Ferrara, \\
Department of Mathematics and Computer Science\\
Via Machiavelli 30, Ferrara, I-44121, Italy\\
e-mail: men@unife.it \and Florin Panaite \\
Institute of Mathematics of the Romanian Academy,\\
PO-Box 1-764, RO-014700 Bucharest, Romania\\
e-mail: florin.panaite@imar.ro }
\maketitle

\begin{abstract}
We contribute to the study of Rota-Baxter operators on other types of algebras than associative and Lie algebras.
If $A$ is an algebra of a certain type and $R$ is a Rota-Baxter operator on $A$, one can
define a new multiplication on $A$ by means of $R$ and the previous multiplication and ask under
what circumstances the new algebra is of the same type as $A$. Our first main result deals with such a situation in the case of BiHom-Lie algebras. Our second main result is a BiHom analogue
of Aguiar's theorem that shows how to obtain a pre-Lie algebra from a Rota-Baxter operator of weight zero on a Lie algebra.
The BiHom analogue does not work for BiHom-Lie algebras, but for a new concept we introduce here, called
left BiHom-Lie algebra, at which we arrived by defining first the BiHom version of Leibniz algebras. \\
\textbf{Keywords}: Rota-Baxter operator; BiHom-Lie algebra; BiHom-Leibniz algebra\\
\textbf{MSC2010}: 15A04; 17A99; 17D99
\end{abstract}

\section{Introduction}
Algebras of Hom-type appeared in the Physics literature of the 1990's, in the context of quantum deformations
of some algebras of vector fields, such as the Witt and Virasoro algebras, in connection with oscillator algebras
(\cite{AizawaSaito,Hu}).
A quantum deformation consists of replacing the usual derivation by a $\sigma$-derivation.
It turns out that the algebras obtained in this way do not satisfy the Jacobi identity anymore, but instead they satisfy a
modified version involving a homomorphism. This kind of algebras
were called Hom-Lie algebras and studied by Hartwig, Larsson and Silvestrov in \cite{JDS,DS}. The Hom analogue of
associative algebras was introduced  in \cite{ms}, where it is shown
that the commutator bracket defined by the multiplication in a Hom-associative
algebra leads to a Hom-Lie algebra. A categorical approach to Hom-type algebras was considered  in \cite{stef}.  A generalization  has been given in \cite{gmmp}, where a construction of a Hom-category including a group action led to concepts of BiHom-type algebras. Hence, BiHom-associative algebras and BiHom-Lie algebras, involving two linear maps (called structure maps), were introduced. The main axioms for these types of algebras
(BiHom-associativity, BiHom-skew-symmetry and BiHom-Jacobi condition) were dictated by categorical considerations.

Rota-Baxter operators first appeared in G. Baxter's work in probability and the study of fluctuation theory (\cite{Baxter}).  Afterwards, Rota-Baxter algebras were intensively studied by Rota (\cite{Rota,Rota2}) in connection with combinatorics.
Rota-Baxter operators have appeared in a wide range of areas in pure and applied mathematics, for example in
the work of  Connes and Kreimer (\cite{Connes-Kreimer}) about their Hopf algebra approach to renormalization  of quantum field theory. This seminal work was followed by an important development of the theory of
Rota-Baxter algebras and their connections to other algebraic structures, see for example the book \cite{Guo} and
references therein. In the context of Lie algebras, Rota-Baxter operators were introduced
independently by Belavin and Drinfeld (\cite{Drinfeld}) and Semenov-Tian-Shansky (\cite{semenov}), in connection with solutions of the (modified) classical Yang-Baxter equation.

The first aim of this paper is to obtain a BiHom analogue
of the classical result, due to Aguiar (\cite{aguiar}), saying that, if
$(L, [\cdot , \cdot ])$ is a Lie algebra and $R:L\rightarrow L$ is a Rota-Baxter operator of weight $0$, and one defines a new
operation on $L$ by $x\cdot y=[R(x), y]$, then $(L, \cdot )$ is a left pre-Lie algebra. The Hom analogue of this
result was obtained in \cite{makhloufrotabaxter}, see also \cite{mak-yau}. So, we first define, in Section \ref{sec3}, the concepts of
(left and right) BiHom-pre-Lie algebras
and present some properties of these objects. Then, our aim was to prove that, if $(L, [\cdot , \cdot ], \alpha , \beta )$
is a BiHom-Lie algebra and $R:L\rightarrow L$ is a Rota-Baxter operator of weight $0$ such that
$R\circ \alpha =\alpha \circ R$ and $R\circ \beta =\beta \circ R$, then $(L, \cdot, \alpha , \beta )$
is a left BiHom-pre-Lie algebra, where $x\cdot y=[R(x), y]$. It turns out that this does not work
(unless the structure maps $\alpha $
and $\beta $ are bijective). We were thus led to realize that, apart from the concept of BiHom-Lie algebra
(introduced in \cite{gmmp}), there exist other natural BiHom analogues of Lie and Hom-Lie algebras,
that we called left and right BiHom-Lie algebras. We arrived at these concepts as follows (we concentrate on
the left case): we introduced first the concept of
left BiHom-Leibniz algebra (the natural BiHom analogue of Loday's concept of left Leibniz algebra)
and then a left BiHom-Lie algebra is a left BiHom-Leibniz algebra satisfying the
BiHom-skew-symmetry condition. It turns out (Proposition \ref{equivBHLie}) that the concepts of BiHom-Lie algebra and
left BiHom-Lie algebra coincide if the structure maps are bijective, and that the BiHom analogue of Aguiar's result
mentioned above holds indeed for left BiHom-Lie algebras (Proposition
\ref{lrprelie}), although it does not hold in general for BiHom-Lie algebras.

In a previous paper (\cite{lmmp1}), where we studied Rota-Baxter operators on BiHom-associative algebras,
we proved the following result. Let $(A, \cdot , \alpha , \beta)$ be a BiHom-associative algebra,
$R:A\rightarrow A$ a Rota-Baxter operator of weight $\lambda $ commuting with $\alpha $ and $\beta $, and
define a new multiplication on $A$ by $a*b=R(a)\cdot b+a\cdot R(b)+\lambda a\cdot b$, for all $a, b\in A$; then
$(A, * , \alpha , \beta)$ is also a BiHom-associative algebra. The second aim of the current paper is to prove a similar result for BiHom-Lie algebras; this is achieved in Proposition
\ref{Proposition:BiHLie}.  We also prove similar results for left or right BiHom-Leibniz algebras and for left or right
BiHom-Lie algebras.

In Section \ref{sec6} we prove that, in the case of bijective structure maps,
Propositions \ref{lrprelie} and \ref{Proposition:BiHLie} admit a common generalization, in terms of what we call
BiHom-PostLie algebras (a concept introduced independently in \cite{adimi}),
the BiHom analogue of the concept of PostLie algebra introduced in \cite{vallette} and
studied in \cite{bai}.

\section{Preliminaries}\label{sec1}
\setcounter{equation}{0} 

All algebraic structures in this paper
(algebras, linear spaces etc.) will be defined over a base field $\Bbbk $; we denote
$\otimes_{\Bbbk}$ simply by $\otimes $. We denote by $_{\Bbbk }\mathcal{M}$ the category of linear spaces over $\Bbbk $.
Unless otherwise specified, the
algebras we work with are
\emph{not} required to be unital, and a multiplication $\mu :V\otimes
V\rightarrow V$ on a linear space $V$ is denoted by $\mu
(v\otimes v^{\prime })=vv^{\prime }$. We write $g\circ f$ or simply $gf$ for the composition of two maps $f$ and $g$.
The identity
map on a linear space $V$ is denoted by $id_V$.
We denote by $\circlearrowleft _{x, y, z}$ summation over the cyclic permutations of some elements
$x, y, z$.

\begin{definition}
A left (respectively right) pre-Lie algebra is a linear space $A$ endowed with a linear map
$\cdot :A\ot A\rightarrow A$ satisfying, for all $x, y, z\in A$,
\begin{eqnarray}
&&x\cdot (y\cdot z)-(x\cdot y)\cdot z=y\cdot (x\cdot z)-(y\cdot x)\cdot z,
\label{lpL}
\end{eqnarray}
respectively
\begin{eqnarray}
&&x\cdot (y\cdot z)-(x\cdot y)\cdot z=x\cdot (z\cdot y)-(x\cdot z)\cdot y.
\label{rpL}
\end{eqnarray}
\end{definition}

Any associative algebra is a left and a right pre-Lie algebra. If $(A, \cdot )$ is a left or a right pre-Lie
algebra, then $(A, [x, y]=x\cdot y-y\cdot x)$ is a Lie algebra.
\begin{definition} (\cite{lodayleibniz})
A left (respectively right) Leibniz algebra is a pair $(L, [\cdot ,\cdot  ])$, where $L$ is a linear space
and
$[\cdot ,\cdot  ]:L\times L\rightarrow L$ is a bilinear map satisfying, for all $x, y, z\in L$,
\begin{eqnarray}
&&[x, [y, z]]=[[x, y], z]+[y, [x, z]], \label{leftleibniz}
\end{eqnarray}
respectively
\begin{eqnarray}
&&[[x, y], z]=[[x, z], y]+[x, [y, z]]. \label{rightleibniz}
\end{eqnarray}
A morphism of (left or right) Leibniz algebras $L$ and $L'$ is a linear map $f:L\rightarrow L'$ such that
$f([x, y])=[f(x), f(y)]$, for all $x, y\in L$.
\end{definition}

A Rota-Baxter structure on an algebra of a given type is defined as follows.
\begin{definition}\label{RB-DEF}
Let $A$ be a linear space and $\mu :A\ot A\rightarrow A$, $\mu (x\ot y)=xy$, for all $x, y\in A$, a
linear multiplication on $A$ and let $\lambda \in \Bbbk $. A Rota-Baxter operator of weight $\lambda $
for $(A, \mu )$ is a linear map $R:A\rightarrow A$ satisfying the so-called Rota-Baxter condition
\begin{eqnarray}
&&R(x)R(y)=R(R(x)y+xR(y)+\lambda xy), \;\;\;\forall \; x, y\in A. \label{RBrel}
\end{eqnarray}
\end{definition}

In this case, if we define on $A$ a new multiplication by $x*y=xR(y)+R(x)y+\lambda xy$,
for all $x, y\in A$, then
$R(x*y)=R(x)R(y)$, for all $x, y\in A$, and $R$ is a Rota-Baxter operator of weight $\lambda $ for
$(A, *)$. If $(A, \mu )$ is associative, then $(A, *)$ is also associative.

We recall now from \cite{gmmp} several facts about BiHom-type algebras.
\begin{definition}
A BiHom-associative algebra is a 4-tuple $\left( A,\mu ,\alpha ,\beta \right) $, where $A$ is
a linear space, $\alpha :A\rightarrow A$, $\beta :A\rightarrow A$
and $\mu :A\otimes A\rightarrow A$ are linear maps, with notation $\mu (x\otimes y) =xy$, for all $x, y\in A$, satisfying the following
conditions, for all $x, y, z\in A$:
\begin{gather}
\alpha \circ \beta =\beta \circ \alpha , \\
\alpha (xy) =\alpha (x)\alpha (y) \text{ and }\beta (xy)=\beta (x)\beta (y) ,\quad \text{%
(multiplicativity)}  \label{eqalfabeta} \\
\alpha (x)(yz)=(xy)\beta (z).\quad \text{%
(BiHom-associativity)}  \label{eqasso}
\end{gather}

The maps $\alpha $ and $\beta $ (in this order) are called the structure maps
of $A$.
\end{definition}
\begin{definition}\label{Def-BiHomLie}
A BiHom-Lie algebra $\left( L,
\left[\cdot , \cdot \right] ,\alpha ,\beta \right) $ is a 4-tuple in which $L$ is a linear
space, $\alpha , \beta :L\rightarrow L$ are linear maps and $\left[\cdot , \cdot \right]
:L\times L\rightarrow L$ is a bilinear map,
such that, for all $x, y, z\in L:$%
\begin{gather*}
\alpha \circ \beta =\beta \circ \alpha , \\
\alpha (\left[ x, y \right])=\left[ \alpha \left(
x\right),\alpha (y) \right]\;\;
\text{ and } \;\;\beta (\left[x, y \right])=\left[
\beta \left( x \right) ,\beta \left( y \right) %
\right], \\
\left[ \beta \left( x\right) ,\alpha \left( y\right) \right] =-%
\left[ \beta \left( y\right) ,\alpha \left( x\right) \right],
\;\;\;\; \text{ (BiHom-skew-symmetry)} \\
\left[ \beta ^{2}\left( x\right) ,\left[ \beta \left( y\right)
,\alpha \left( z\right) \right] \right] +\left[ \beta
^{2}\left( y\right) ,\left[ \beta \left( z\right)
,\alpha \left( x\right) \right] \right] +\left[ \beta ^{2}\left( z\right) ,\left[ \beta \left( x\right) ,\alpha \left( y
\right) \right] \right] =0. \\
\text{ (BiHom-Jacobi condition)}
\end{gather*}

The maps $\alpha $ and $\beta $ (in this order) are called the structure maps
of $L$.

A morphism between two BiHom-Lie algebras $\left( L,\left[\cdot , \cdot \right] ,\alpha ,\beta \right)$ and
$\left( L^{\prime },\left[\cdot , \cdot \right]^{\prime },\alpha ^{\prime },\beta
^{\prime }\right)$  is a linear map $f:L\rightarrow
L^{\prime }$ satisfying the conditions $\alpha ^{\prime }\circ f=f\circ \alpha $, $\beta
^{\prime }\circ f=f\circ \beta $ and $f(\left[x, y\right])=[f(x),
f(y)]^{\prime }$, for all $x, y\in L$.
\end{definition}

If $\left( L,\left[\cdot , \cdot \right] \right) $ is an ordinary Lie algebra
and $\alpha ,\beta :L\rightarrow L$ are two commuting morphisms of Lie algebras,  and
we define the bilinear map $\left\{\cdot , \cdot \right\} :L\times L\rightarrow L$,
$\left\{ x, y\right\} =\left[ \alpha \left( x\right) ,\beta \left( y\right) %
\right]$, for all $x, y\in L$,
then $L_{\left( \alpha ,\beta \right) }=(L, \left\{\cdot , \cdot \right\}, \alpha ,
\beta )$ is a BiHom-Lie algebra, called the Yau twist of $\left( L,%
\left[\cdot , \cdot \right] \right) $.

If $\left( L,\left[\cdot , \cdot \right] ,\alpha ,\beta \right) $ is a
BiHom-Lie algebra, $\alpha^{\prime }, \beta^{\prime }:L\rightarrow L$ are
morphisms of BiHom-Lie algebras and any two
of the maps $\alpha, \beta , \alpha ^{\prime }, \beta ^{\prime }$ commute,
then $\left( L,\left[\cdot , \cdot
\right]\circ(\alpha^{\prime }\otimes\beta^{\prime }),
\alpha\circ\alpha^{\prime },\beta\circ\beta^{\prime }\right) $ is a
BiHom-Lie algebra.
\section{BiHom-pre-Lie algebras}\label{sec3}
\setcounter{equation}{0}
\begin{definition}
A left (respectively right) BiHom-pre-Lie algebra $(A, \cdot , \alpha , \beta )$ is a 4-tuple in which $A$ is
a linear space and
$\cdot :A\ot A\rightarrow A$ and $\alpha , \beta :A\rightarrow A$ are linear maps satisfying
$\alpha \circ \beta =\beta \circ \alpha $, $\alpha (x\cdot y)=\alpha (x)\cdot \alpha (y)$,
$\beta (x\cdot y)=\beta (x)\cdot \beta (y)$ and
\begin{eqnarray}
&&\alpha \beta (x)\cdot (\alpha (y)\cdot z)-(\beta (x)\cdot \alpha (y))\cdot \beta (z)
=\alpha \beta (y)\cdot (\alpha (x)\cdot z)-(\beta (y)\cdot \alpha (x))\cdot \beta (z),
\label{lBHpL}
\end{eqnarray}
respectively
\begin{eqnarray}
&&\alpha (x)\cdot (\beta (y)\cdot \alpha (z))-(x\cdot \beta (y))\cdot \alpha \beta (z)=
\alpha (x)\cdot (\beta (z)\cdot \alpha (y))-(x\cdot \beta (z))\cdot \alpha \beta (y),
\label{rBHpL}
\end{eqnarray}
for all $x, y, z\in A$. The maps $\alpha $ and $\beta $ (in this order) are called the structure maps
of $A$.

A morphism between two left or right BiHom-pre-Lie algebras $(A, \cdot , \alpha , \beta )$ and
$(A', \cdot ', \alpha ', \beta ')$
is a linear map
$f:A\rightarrow A'$ satisfying $f(x\cdot y)=f(x)\cdot ' f(y)$, for all $x, y\in A$,
as well as $f\circ \alpha =\alpha '\circ f$ and $f\circ \beta =\beta '\circ f$.
\end{definition}

Obviously, any BiHom-associative algebra is also a
left and a right BiHom-pre-Lie algebra.

If $(A, \cdot , \alpha , \beta )$ is a left BiHom-pre-Lie algebra and we define a new multiplication $*$ on $A$
by $x*y=y\cdot x$, then $(A, * , \beta , \alpha )$ is a right BiHom-pre-Lie algebra.
\begin{proposition}
If $(A, \cdot )$ is a left (respectively right) pre-Lie algebra and $\alpha , \beta :A\rightarrow A$ are linear maps satisfying
$\alpha \circ \beta =\beta \circ \alpha $, $\alpha (x\cdot y)=\alpha (x)\cdot \alpha (y)$ and
$\beta (x\cdot y)=\beta (x)\cdot \beta (y)$, for all $x, y\in A$, and we define a new multiplication on $A$ by
$x*y=\alpha (x)\cdot \beta (y)$, then $(A, * , \alpha , \beta )$ is a left (respectively right) BiHom-pre-Lie algebra,
called the Yau twist of $(A, \cdot )$.
\end{proposition}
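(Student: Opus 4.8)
The plan is to unfold the definition $u*v=\alpha(u)\cdot\beta(v)$ and reduce the left BiHom-pre-Lie identity \eqref{lBHpL} for $*$ directly to the left pre-Lie identity \eqref{lpL} for $\cdot$. First I would dispose of the compatibility conditions. The relation $\alpha\circ\beta=\beta\circ\alpha$ is assumed, and multiplicativity of $\alpha,\beta$ for $*$ is immediate from multiplicativity for $\cdot$ together with the commutativity of $\alpha$ and $\beta$: indeed $\alpha(x*y)=\alpha(\alpha(x)\cdot\beta(y))=\alpha^2(x)\cdot\alpha\beta(y)=\alpha^2(x)\cdot\beta\alpha(y)=\alpha(x)*\alpha(y)$, and symmetrically for $\beta$.

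The core of the argument is to expand the four products appearing in \eqref{lBHpL}. Using $u*v=\alpha(u)\cdot\beta(v)$ repeatedly, and pushing $\alpha,\beta$ through products by multiplicativity and commutativity, I expect each term to collapse onto a bracketing of the same three twisted elements. Setting $X=\alpha^2\beta(x)$, $Y=\alpha^2\beta(y)$ and $Z=\beta^2(z)$, the computation should give
\[
\alpha\beta(x)*(\alpha(y)*z)=X\cdot(Y\cdot Z),\qquad (\beta(x)*\alpha(y))*\beta(z)=(X\cdot Y)\cdot Z,
\]
while the two terms on the right-hand side of \eqref{lBHpL}, obtained by swapping $x$ and $y$, become $Y\cdot(X\cdot Z)$ and $(Y\cdot X)\cdot Z$. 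Hence \eqref{lBHpL} reads
\[
X\cdot(Y\cdot Z)-(X\cdot Y)\cdot Z=Y\cdot(X\cdot Z)-(Y\cdot X)\cdot Z,
\]
which is precisely \eqref{lpL} evaluated at $X,Y,Z$, and therefore holds.

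The only real obstacle is the bookkeeping of the structure maps: one must verify that after expansion each of the four products carries exactly $\alpha^2\beta$ in its first two slots and $\beta^2$ in its third, so that all four are built from the common triple $(X,Y,Z)$ and the twist becomes irrelevant to the identity. This matching is exactly what the placement of $\alpha,\beta$ in \eqref{lBHpL} is designed to ensure, so no genuine difficulty should arise. The right case is entirely analogous: expanding \eqref{rBHpL} with the same conventions and setting $X=\alpha^2(x)$, $Y=\alpha\beta^2(y)$, $Z=\alpha\beta^2(z)$ turns \eqref{rBHpL} into the right pre-Lie identity \eqref{rpL} evaluated at $X,Y,Z$.
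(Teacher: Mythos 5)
Your proposal is correct and is essentially identical to the paper's own proof: both unfold $u*v=\alpha(u)\cdot\beta(v)$, observe that every term of \eqref{lBHpL} collapses onto the triple $X=\alpha^2\beta(x)$, $Y=\alpha^2\beta(y)$, $Z=\beta^2(z)$, and then invoke \eqref{lpL} for these elements (your bookkeeping for the right case, with $X=\alpha^2(x)$, $Y=\alpha\beta^2(y)$, $Z=\alpha\beta^2(z)$, also checks out). The paper likewise verifies only the left identity and leaves the remaining conditions to the reader.
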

\begin{proof}
We only prove (\ref{lBHpL}) and leave the rest to the reader:\\[2mm]
${\;\;\;}$$\alpha \beta (x)* (\alpha (y)* z)-(\beta (x)* \alpha (y))* \beta (z)$
\begin{eqnarray*}
&=&\alpha ^2\beta (x)\cdot (\alpha ^2\beta (y)\cdot \beta ^2(z))-(\alpha ^2\beta (x)\cdot
\alpha ^2\beta (y))\cdot \beta ^2(z)\\
&\overset{(\ref{lpL})}{=}&\alpha ^2\beta (y)\cdot (\alpha ^2\beta (x)\cdot \beta ^2(z))-(\alpha ^2\beta (y)\cdot
\alpha ^2\beta (x))\cdot \beta ^2(z)\\
&=&\alpha \beta (y)* (\alpha (x)* z)-(\beta (y)* \alpha (x))* \beta (z),
\end{eqnarray*}
finishing the proof.
\end{proof}
\begin{remark}
More generally, one can prove that, if
$(A, \cdot ,\alpha , \beta )$ is a left (respectively right) BiHom-pre-Lie algebra and $\tilde{\alpha }, \tilde{\beta }:
A\rightarrow A$ are two morphisms of BiHom-pre-Lie algebras such that any two of the maps $\alpha, \beta ,
\tilde{\alpha }, \tilde{\beta }$ commute, and we define a new multiplication on $A$ by
$x*y=\tilde{\alpha }(x)\cdot \tilde{\beta }(y)$,
for all $x, y\in A$, then $(A, *, \alpha \circ \tilde{\alpha }, \beta \circ \tilde{\beta })$
is also a left (respectively right) BiHom-pre-Lie algebra.
\end{remark}
\begin{proposition}
Let $(A, \cdot , \alpha , \beta )$ be a left or a right BiHom-pre-Lie algebra such that $\alpha $ and $\beta $ are bijective.
Define $[\cdot ,\cdot  ]:A\ot A\rightarrow A$ by $[x, y]=x\cdot y-(\alpha ^{-1}\beta (y))\cdot (\alpha \beta ^{-1}(x))$,
for all $x, y\in A$. Then $(A, [\cdot ,\cdot  ] , \alpha , \beta )$ is a BiHom-Lie algebra.
\end{proposition}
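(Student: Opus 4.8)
The plan is to reduce everything to the classical (un-twisted) situation and then invoke the Yau twist construction recalled in Section \ref{sec1}. Since $\alpha$ and $\beta$ are bijective and multiplicative, their inverses $\alpha^{-1},\beta^{-1}$ are again multiplicative for $\cdot$, and all four maps $\alpha,\beta,\alpha^{-1},\beta^{-1}$ commute with one another. Define a new operation $\bullet$ on $A$ by $x\bullet y=\alpha^{-1}(x)\cdot\beta^{-1}(y)$; equivalently $x\cdot y=\alpha(x)\bullet\beta(y)$, so that $(A,\cdot,\alpha,\beta)$ is exactly the Yau twist of $(A,\bullet)$.

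First I would show that $(A,\bullet)$ is an ordinary left (respectively right) pre-Lie algebra. Expanding the associator $(x,y,z)_\bullet=x\bullet(y\bullet z)-(x\bullet y)\bullet z$ and using multiplicativity of $\alpha^{-1},\beta^{-1}$ gives, in the left case,
\[
(x,y,z)_\bullet = \alpha^{-1}(x)\cdot\bigl(\alpha^{-1}\beta^{-1}(y)\cdot\beta^{-2}(z)\bigr) - \bigl(\alpha^{-2}(x)\cdot\alpha^{-1}\beta^{-1}(y)\bigr)\cdot\beta^{-1}(z).
\]
One then checks that feeding $X=\alpha^{-2}\beta^{-1}(x)$, $Y=\alpha^{-2}\beta^{-1}(y)$, $Z=\beta^{-2}(z)$ into the defining identity (\ref{lBHpL}) turns its left-hand side into $(x,y,z)_\bullet$ and its right-hand side into $(y,x,z)_\bullet$ (here bijectivity is used crucially, to absorb the powers of $\alpha,\beta$ appearing in the axiom). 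Hence (\ref{lBHpL}) is precisely the left-symmetry (\ref{lpL}) of the associator for $\bullet$. The right case is identical, matching (\ref{rBHpL}) with (\ref{rpL}).

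By the classical fact recorded in Section \ref{sec1}, it follows that $(A,\{\cdot,\cdot\})$ with $\{a,b\}=a\bullet b-b\bullet a$ is an ordinary Lie algebra; moreover $\alpha$ and $\beta$ are commuting multiplicative maps for $\bullet$, hence for $\{\cdot,\cdot\}$. It then remains only to identify the given bracket with the Yau twist of this Lie algebra. Applying $x\cdot y=\alpha(x)\bullet\beta(y)$ to both terms,
\[
[x,y]=x\cdot y-(\alpha^{-1}\beta(y))\cdot(\alpha\beta^{-1}(x))=\alpha(x)\bullet\beta(y)-\beta(y)\bullet\alpha(x)=\{\alpha(x),\beta(y)\}.
\]
Thus $(A,[\cdot,\cdot],\alpha,\beta)$ is exactly the Yau twist of the Lie algebra $(A,\{\cdot,\cdot\})$, and the Yau twist result recalled in Section \ref{sec1} immediately yields that it is a BiHom-Lie algebra.

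The main obstacle is the pre-Lie verification of the second paragraph: the bookkeeping of the composite powers of $\alpha^{\pm1},\beta^{\pm1}$, so that the chosen substitution in (\ref{lBHpL}) lands \emph{exactly} on $(x,y,z)_\bullet$ and $(y,x,z)_\bullet$. A purely computational alternative, avoiding the untwist, would be to verify the four BiHom-Lie axioms of Definition \ref{Def-BiHomLie} directly: commutativity $\alpha\beta=\beta\alpha$, multiplicativity of $\alpha,\beta$ for $[\cdot,\cdot]$, and BiHom-skew-symmetry are short calculations (e.g. $[\beta(x),\alpha(y)]=\beta(x)\cdot\alpha(y)-\beta(y)\cdot\alpha(x)$, manifestly anti-symmetric in $x,y$), while the BiHom-Jacobi condition would, after full expansion, again collapse to (\ref{lBHpL}) (resp. (\ref{rBHpL})). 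This route is longer and less transparent, which is why I would prefer the untwisting argument.
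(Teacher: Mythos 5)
Your proof is correct, and it takes a genuinely different route from the paper's. The paper proceeds by direct verification: multiplicativity of $\alpha ,\beta $ and BiHom-skew-symmetry are noted as obvious, and the BiHom-Jacobi condition is checked by expanding the cyclic sum $\circlearrowleft _{x, y, z}[\beta ^2(x), [\beta (y), \alpha (z)]]$ into twelve $\cdot$-terms and cancelling them in three groups via three applications of (\ref{lBHpL}). You instead untwist: with $x\bullet y=\alpha ^{-1}(x)\cdot \beta ^{-1}(y)$ you reduce (\ref{lBHpL}) to the classical identity (\ref{lpL}), and then compose the two facts recorded in Section \ref{sec1} (the commutator of a pre-Lie algebra is a Lie algebra; the Yau twist of a Lie algebra along commuting multiplicative maps is a BiHom-Lie algebra), using that $[x, y]=\{\alpha (x), \beta (y)\}$ where $\{a, b\}=a\bullet b-b\bullet a$. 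I checked your key substitution: with $X=\alpha ^{-2}\beta ^{-1}(x)$, $Y=\alpha ^{-2}\beta ^{-1}(y)$, $Z=\beta ^{-2}(z)$ one gets $\alpha \beta (X)\cdot (\alpha (Y)\cdot Z)=\alpha ^{-1}(x)\cdot (\alpha ^{-1}\beta ^{-1}(y)\cdot \beta ^{-2}(z))$ and $(\beta (X)\cdot \alpha (Y))\cdot \beta (Z)=(\alpha ^{-2}(x)\cdot \alpha ^{-1}\beta ^{-1}(y))\cdot \beta ^{-1}(z)$, which are exactly the two terms of $(x,y,z)_\bullet$, and bijectivity makes $(x,y,z)\mapsto (X,Y,Z)$ onto, so (\ref{lBHpL}) and (\ref{lpL}) for $\bullet$ are indeed equivalent; your identification $[x,y]=\{\alpha (x),\beta (y)\}$ also checks out. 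One small inaccuracy: the right-handed case is analogous but not ``identical''---matching (\ref{rBHpL}) with (\ref{rpL}) requires a different substitution, e.g. $X=\alpha ^{-2}(x)$, $Y=\alpha ^{-1}\beta ^{-2}(y)$, $Z=\alpha ^{-1}\beta ^{-2}(z)$. As for what each approach buys: yours is more conceptual and yields a structural by-product (every left or right BiHom-pre-Lie algebra with bijective structure maps is the Yau twist of an ordinary pre-Lie algebra), whereas the paper's computation is self-contained, uses nothing beyond the defining identity, and does not require setting up the untwisting dictionary.
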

\begin{proof}
We only give the proof in the case $A$ is a left BiHom-pre-Lie algebra, the other case is similar and left to the reader.
The fact that $\alpha $ and $\beta $ are multiplicative with respect to $[\cdot ,\cdot  ]$ is obvious, and so is the
BiHom-skew-symmetry relation.
We only have to prove that BiHom-Jacobi condition holds. We compute, for
$x, y, z\in A$,
\begin{eqnarray*}
\circlearrowleft _{x, y, z}[\beta ^2(x), [\beta (y), \alpha (z)]]&=&
\circlearrowleft _{x, y, z}[\beta ^2(x), \beta (y)\cdot \alpha (z)-\beta (z)\cdot \alpha (y)]\\
&=&\circlearrowleft _{x, y, z}([\beta ^2(x), \beta (y)\cdot \alpha (z)]-[\beta ^2(x), \beta (z)\cdot \alpha (y)])\\
&=&\circlearrowleft _{x, y, z}(\beta ^2(x)\cdot (\beta (y)\cdot \alpha (z))-\alpha ^{-1}\beta (\beta (y)\cdot \alpha (z))\cdot
\alpha \beta (x)\\
&&-\beta ^2(x)\cdot (\beta (z)\cdot \alpha (y))+\alpha ^{-1}\beta (\beta (z)\cdot \alpha (y))\cdot
\alpha \beta (x))\\
&=&\circlearrowleft _{x, y, z}(\beta ^2(x)\cdot (\beta (y)\cdot \alpha (z))-(\alpha ^{-1}\beta ^2(y)\cdot \beta (z))\cdot
\alpha \beta (x)\\
&&-\beta ^2(x)\cdot (\beta (z)\cdot \alpha (y))+(\alpha ^{-1}\beta ^2(z)\cdot \beta (y))\cdot
\alpha \beta (x))\\
&=&\beta ^2(x)\cdot (\beta (y)\cdot \alpha (z))-(\alpha ^{-1}\beta ^2(y)\cdot \beta (z))\cdot
\alpha \beta (x)\\
&&-\beta ^2(x)\cdot (\beta (z)\cdot \alpha (y))+(\alpha ^{-1}\beta ^2(z)\cdot \beta (y))\cdot
\alpha \beta (x)\\
&&+\beta ^2(y)\cdot (\beta (z)\cdot \alpha (x))-(\alpha ^{-1}\beta ^2(z)\cdot \beta (x))\cdot
\alpha \beta (y)\\
&&-\beta ^2(y)\cdot (\beta (x)\cdot \alpha (z))+(\alpha ^{-1}\beta ^2(x)\cdot \beta (z))\cdot
\alpha \beta (y)\\
&&+\beta ^2(z)\cdot (\beta (x)\cdot \alpha (y))-(\alpha ^{-1}\beta ^2(x)\cdot \beta (y))\cdot
\alpha \beta (z)\\
&&-\beta ^2(z)\cdot (\beta (y)\cdot \alpha (x))+(\alpha ^{-1}\beta ^2(y)\cdot \beta (x))\cdot
\alpha \beta (z)\\
&\overset{(\ref{lBHpL})\; 3\; times}{=}&0+0+0=0,
\end{eqnarray*}
finishing the proof.
\end{proof}
\begin{definition} (\cite{lmmp1}) A BiHom-dendriform algebra is a 5-tuple $(A, \prec , \succ , \alpha , \beta )$
consisting of a linear space $A$ and linear maps $\prec , \succ :A\otimes A\rightarrow A$ and
$\alpha , \beta :A\rightarrow A$
satisfying, for all $x, y, z\in A$,
the following conditions:
\begin{eqnarray}
&&\alpha \circ \beta =\beta \circ \alpha , \label{BiHomdend1} \\
&&\alpha (x\prec y)=\alpha (x)\prec \alpha (y), ~
\alpha (x\succ y)=\alpha (x)\succ \alpha (y), \label{BiHomdend3} \\
&&\beta (x\prec y)=\beta (x)\prec \beta (y), ~
\beta (x\succ y)=\beta (x)\succ \beta (y), \label{BiHomdend5} \\
&&(x\prec y)\prec \beta (z)=\alpha (x)\prec (y\prec z+y\succ z),  \label{BiHomdend6} \\
&&(x \succ y)\prec \beta (z)=\alpha (x)\succ (y\prec z), \label{BiHomdend7} \\
&&\alpha (x)\succ (y\succ z)=(x\prec y+x\succ y)\succ \beta (z). \label{BiHomdend8}
\end{eqnarray}

We call $\alpha $ and $\beta $ (in this order) the structure maps
of $A$.
\end{definition}
\begin{proposition} \label{BHdendpreLie}
Let $(A, \prec , \succ , \alpha , \beta )$ be a BiHom-dendriform algebra such that $\alpha $ and $\beta $
are bijective. Let $\rhd , \lhd :A\ot A\rightarrow A$ be linear maps defined for all $x, y\in A$ by
\begin{eqnarray*}
&&x\rhd y=x\succ y-(\alpha ^{-1}\beta (y))\prec (\alpha \beta ^{-1}(x)), \;\;\;\;\;
x\lhd y=x\prec y-(\alpha ^{-1}\beta (y))\succ (\alpha \beta ^{-1}(x)).
\end{eqnarray*}
Then $(A, \rhd , \alpha , \beta )$  (respectively $(A, \lhd , \alpha , \beta )$)
is a left (respectively right) BiHom-pre-Lie algebra.
\end{proposition}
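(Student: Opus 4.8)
The plan is to avoid verifying (\ref{lBHpL}) and (\ref{rBHpL}) by brute force, and instead reduce everything to the classical case by \emph{untwisting}. Since $\alpha$ and $\beta$ are bijective and commute, one has $\alpha^{-1}\beta=\beta\alpha^{-1}$ and, crucially, $\beta^{-1}\alpha=\alpha\beta^{-1}$; I would use these throughout. First I would introduce the untwisted operations $x\prec_0 y=\alpha^{-1}(x)\prec\beta^{-1}(y)$ and $x\succ_0 y=\alpha^{-1}(x)\succ\beta^{-1}(y)$, and prove the \emph{untwisting lemma}: $(A,\prec_0,\succ_0)$ is an ordinary dendriform algebra. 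This is short: to check, say, the classical axiom $(x\prec_0 y)\prec_0 z=x\prec_0(y\prec_0 z+y\succ_0 z)$, I expand both sides using the multiplicativity (\ref{BiHomdend3}), (\ref{BiHomdend5}) of $\alpha^{\pm1},\beta^{\pm1}$, rewrite the outer right slot as $\beta(\beta^{-2}(z))$ so that (\ref{BiHomdend6}) applies, and observe that the two sides coincide (here the commutation $\alpha^{-1}\beta^{-1}=\beta^{-1}\alpha^{-1}$ is used). The remaining two classical dendriform axioms follow identically from (\ref{BiHomdend7}) and (\ref{BiHomdend8}).

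Next I would invoke the classical fact (Loday, Aguiar) that an ordinary dendriform algebra $(A,\prec_0,\succ_0)$ produces a left pre-Lie algebra $(A,\rhd_0)$ with $x\rhd_0 y=x\succ_0 y-y\prec_0 x$ and a right pre-Lie algebra $(A,\lhd_0)$ with $x\lhd_0 y=x\prec_0 y-y\succ_0 x$, satisfying (\ref{lpL}) and (\ref{rpL}) respectively. The key point is that the operation $\rhd$ of the statement is exactly the Yau twist of $\rhd_0$:
\[
\alpha(x)\rhd_0\beta(y)=\alpha(x)\succ_0\beta(y)-\beta(y)\prec_0\alpha(x)
=x\succ y-(\alpha^{-1}\beta(y))\prec(\beta^{-1}\alpha(x)),
\]
and since $\beta^{-1}\alpha=\alpha\beta^{-1}$ the right-hand side is precisely $x\rhd y$. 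The same computation with $\prec_0$ and $\succ_0$ interchanged shows that $x\lhd y=\alpha(x)\lhd_0\beta(y)$.

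Finally I would apply the Yau-twisting Proposition for pre-Lie algebras proved earlier in this section. Because $\alpha$ and $\beta$ commute and (as one checks directly, again via $\beta^{-1}\alpha=\alpha\beta^{-1}$) are multiplicative with respect to $\rhd_0$ and $\lhd_0$, that Proposition turns the left pre-Lie algebra $(A,\rhd_0)$ into the left BiHom-pre-Lie algebra $(A,\rhd,\alpha,\beta)$ and the right pre-Lie algebra $(A,\lhd_0)$ into the right BiHom-pre-Lie algebra $(A,\lhd,\alpha,\beta)$, which is the assertion.

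I expect the main obstacle to be the untwisting lemma: although each of the three classical dendriform identities reduces to one of (\ref{BiHomdend6})--(\ref{BiHomdend8}), one must route the inverse structure maps carefully so that the hypotheses of those axioms (a $\beta$ on the outer right factor, an $\alpha$ on the outer left factor) are actually met, and this is exactly where bijectivity and $\beta^{-1}\alpha=\alpha\beta^{-1}$ enter. A reader preferring a self-contained argument can instead expand (\ref{lBHpL}) directly with $\cdot=\rhd$: each of the four products splits into a $\succ$-term and a $\prec$-term, yielding sixteen triple products which, after normalizing the powers of $\alpha,\beta$, collapse in pairs under (\ref{BiHomdend6})--(\ref{BiHomdend8}); this is longer but invokes nothing beyond the definitions.
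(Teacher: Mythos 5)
Your proposal is correct, and it takes a genuinely different route from the paper. The paper proves the proposition by brute force: it expands $\alpha\beta(x)\rhd(\alpha(y)\rhd z)-(\beta(x)\rhd\alpha(y))\rhd\beta(z)$ into its $\succ$ and $\prec$ constituents, applies (\ref{BiHomdend6}) and (\ref{BiHomdend8}), then (\ref{BiHomdend7}), and finishes by observing that the resulting six-term expression is symmetric in $x$ and $y$ --- precisely the self-contained fallback you sketch in your last paragraph. Your untwisting route is sound at every step: $(A,\prec_0,\succ_0)$ is indeed an ordinary dendriform algebra (each classical axiom reduces to one of (\ref{BiHomdend6})--(\ref{BiHomdend8}) after writing the outer right slot as $\beta(\beta^{-2}(z))$, respectively the outer left slot as $\alpha(\alpha^{-2}(x))$, using multiplicativity of $\alpha^{\pm1},\beta^{\pm1}$ and their commutation); the identification $x\rhd y=\alpha(x)\rhd_0\beta(y)$ is exactly right, with $\beta^{-1}\alpha=\alpha\beta^{-1}$ doing the work; and the hypotheses of the paper's Yau-twist proposition for pre-Lie algebras (commutation of $\alpha,\beta$ and their multiplicativity with respect to $\rhd_0$, $\lhd_0$) hold just as you indicate. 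What your approach buys: it is conceptual, it exhibits every BiHom-dendriform algebra with bijective structure maps as a Yau twist of an ordinary dendriform algebra, it reuses the earlier proposition of this section instead of redoing a sixteen-term computation, and it treats the left and right cases uniformly. What it costs: it rests on the classical dendriform-to-pre-Lie fact that $x\rhd_0 y=x\succ_0 y-y\prec_0 x$ (respectively $x\lhd_0 y=x\prec_0 y-y\succ_0 x$) is left (respectively right) pre-Lie, which the paper never states or proves; that fact is standard in the literature, and its proof is the $\alpha=\beta=\mathrm{id}$ case of the paper's own symmetric-associator computation, so this is a citation dependency rather than a gap --- but a fully self-contained write-up would have to include that verification.
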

\begin{proof}
We only check  identity (\ref{lBHpL}) and leave the rest to the reader. We compute:\\[2mm]
${\;\;\;}$$\alpha \beta (x)\rhd (\alpha (y)\rhd z)-(\beta (x)\rhd \alpha (y))\rhd \beta (z)$
\begin{eqnarray*}
&=&\alpha \beta (x)\rhd (\alpha (y)\succ z-\alpha ^{-1}\beta (z)\prec \alpha ^2\beta ^{-1}(y)) \\
&&-(\beta (x)\succ \alpha (y)-\beta (y)\prec \alpha (x))\rhd \beta (z)\\
&=&\alpha \beta (x)\succ (\alpha (y)\succ z)-(\beta (y)\succ \alpha ^{-1}\beta (z))\prec \alpha ^2(x)\\
&&-\alpha\beta (x)\succ (\alpha ^{-1}\beta (z)\prec \alpha ^2\beta ^{-1}(y))+
(\alpha ^{-2}\beta ^2(z)\prec \alpha (y))\prec \alpha ^2(x)\\
&&-(\beta (x)\succ \alpha (y))\succ \beta (z)+\alpha ^{-1}\beta ^2(z)\prec (\alpha (x)\succ
\alpha ^2\beta ^{-1}(y))\\
&&+(\beta (y)\prec \alpha (x))\succ \beta (z)-\alpha ^{-1}\beta ^2(z)\prec (\alpha (y)\prec
\alpha ^2\beta ^{-1}(x))\\
&\overset{(\ref{BiHomdend6}), \;(\ref{BiHomdend8})}{=}&
(\beta (x)\prec \alpha (y))\succ \beta (z)+(\beta (x)\succ \alpha (y))\succ \beta (z)\\
&&-(\beta (y)\succ \alpha ^{-1}\beta (z))\prec \alpha ^2(x)
-\alpha\beta (x)\succ (\alpha ^{-1}\beta (z)\prec \alpha ^2\beta ^{-1}(y))\\
&&+
\alpha ^{-1}\beta ^2(z)\prec (\alpha (y)\prec \alpha ^2\beta ^{-1}(x))
+
\alpha ^{-1}\beta ^2(z)\prec (\alpha (y)\succ \alpha ^2\beta ^{-1}(x))\\
&&-(\beta (x)\succ \alpha (y))\succ \beta (z)+\alpha ^{-1}\beta ^2(z)\prec (\alpha (x)\succ
\alpha ^2\beta ^{-1}(y))\\
&&+(\beta (y)\prec \alpha (x))\succ \beta (z)-\alpha ^{-1}\beta ^2(z)\prec (\alpha (y)\prec
\alpha ^2\beta ^{-1}(x))\\
&=&(\beta (x)\prec \alpha (y))\succ \beta (z)
+(\beta (y)\prec \alpha (x))\succ \beta (z)\\
&&-(\beta (y)\succ \alpha ^{-1}\beta (z))\prec \alpha ^2(x)
-\alpha\beta (x)\succ (\alpha ^{-1}\beta (z)\prec \alpha ^2\beta ^{-1}(y))\\
&&+\alpha ^{-1}\beta ^2(z)\prec (\alpha (y)\succ \alpha ^2\beta ^{-1}(x))
+\alpha ^{-1}\beta ^2(z)\prec (\alpha (x)\succ
\alpha ^2\beta ^{-1}(y))\\
&\overset{(\ref{BiHomdend7})}{=}&(\beta (x)\prec \alpha (y))\succ \beta (z)
+(\beta (y)\prec \alpha (x))\succ \beta (z)\\
&&-(\beta (y)\succ \alpha ^{-1}\beta (z))\prec \alpha ^2(x)
-(\beta (x)\succ \alpha ^{-1}\beta (z))\prec \alpha ^2(y)\\
&&+\alpha ^{-1}\beta ^2(z)\prec (\alpha (y)\succ \alpha ^2\beta ^{-1}(x))
+\alpha ^{-1}\beta ^2(z)\prec (\alpha (x)\succ
\alpha ^2\beta ^{-1}(y)).
\end{eqnarray*}
This expression is obviously symmetric in $x$ and $y$, so we are done.
\end{proof}
\section{BiHom-Leibniz algebras and BiHom-Lie algebras}\label{sec5}
\setcounter{equation}{0}
\begin{definition}
A left (respectively right) BiHom-Leibniz algebra is a 4-tuple $(L, [\cdot ,\cdot ], \alpha , \beta )$, where $L$ is a
linear space,
$[\cdot , \cdot ]:L\times L\rightarrow L$ is a bilinear map and $\alpha , \beta :L\rightarrow L$ are linear maps
satisfying $\alpha \circ \beta =\beta \circ \alpha $, $\alpha ([x, y])=[\alpha (x), \alpha (y)]$,
$\beta ([x, y])=[\beta (x), \beta (y)]$ and
\begin{eqnarray}
&&[\alpha \beta (x), [y, z]]=[[\beta (x), y], \beta (z)]+[\beta (y), [\alpha (x), z]], \label{leftBHleibniz}
\end{eqnarray}
respectively
\begin{eqnarray}
&&[[x, y], \alpha \beta (z)]=[[x, \beta (z)], \alpha (y)]+[\alpha (x), [y, \alpha (z)]],  \label{rightBHleibniz}
\end{eqnarray}
for all $x, y, z\in L$. We call $\alpha $ and $\beta $ (in this order) the structure maps of $L$.

A morphism $f:( L,[\cdot ,\cdot ] ,\alpha ,\beta )\rightarrow
( L', [\cdot ,\cdot ]',\alpha ',\beta ')$ of BiHom-Leibniz algebras is a linear map $f:L\rightarrow
L'$ such that $\alpha '\circ f=f\circ \alpha $, $\beta '\circ f=f\circ \beta $ and $f([x, y])=[f(x),
f(y)]'$, for all $x, y\in L$.
\end{definition}
\begin{proposition}
If $(L, [\cdot ,\cdot ])$ is a left (respectively right) Leibniz algebra and $\alpha , \beta :L\rightarrow L$ are two commuting
morphisms of Leibniz algebras, and we define the map $\{ \cdot , \cdot \}:L\times L\rightarrow L$,
$\{x, y\}=[\alpha (x), \beta (y)]$, for all $x, y\in L$, then $(L, \{ \cdot , \cdot \}, \alpha , \beta )$ is a left (respectively right)
BiHom-Leibniz algebra, called the Yau twist of $L$ and denoted by $L_{(\alpha , \beta )}$.
\end{proposition}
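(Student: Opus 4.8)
The plan is to check each axiom of a (left or right) BiHom-Leibniz algebra directly, substituting the definition $\{x, y\}=[\alpha(x), \beta(y)]$ and exploiting that $\alpha$ and $\beta$ are commuting morphisms of Leibniz algebras. The structure-map conditions come for free: $\alpha\circ\beta=\beta\circ\alpha$ is a hypothesis, and multiplicativity is formal. For instance, $\alpha(\{x, y\})=\alpha([\alpha(x), \beta(y)])=[\alpha^2(x), \alpha\beta(y)]$ since $\alpha$ is a morphism, while $\{\alpha(x), \alpha(y)\}=[\alpha^2(x), \beta\alpha(y)]$, and the two agree precisely because $\alpha\beta=\beta\alpha$; the computation for $\beta$ is identical.

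The substance of the proof is the BiHom-Leibniz identity, which I would establish by expanding both sides in terms of the original bracket and recognizing the ordinary Leibniz identity after a change of variables. In the left case, pushing the outer structure maps inside by multiplicativity and commutativity, I would compute the left-hand side of (\ref{leftBHleibniz}) as
\[ \{\alpha\beta(x), \{y, z\}\}=[\alpha^2\beta(x), [\alpha\beta(y), \beta^2(z)]], \]
and the two summands on the right-hand side as
\[ \{\{\beta(x), y\}, \beta(z)\}=[[\alpha^2\beta(x), \alpha\beta(y)], \beta^2(z)], \]
\[ \{\beta(y), \{\alpha(x), z\}\}=[\alpha\beta(y), [\alpha^2\beta(x), \beta^2(z)]]. \]
Writing $a=\alpha^2\beta(x)$, $b=\alpha\beta(y)$, $c=\beta^2(z)$, the desired identity becomes exactly $[a, [b, c]]=[[a, b], c]+[b, [a, c]]$, i.e.\ the left Leibniz identity (\ref{leftleibniz}) evaluated at $a, b, c$. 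The right case is completely parallel: expanding both sides of (\ref{rightBHleibniz}) and setting $a=\alpha^2(x)$, $b=\alpha\beta(y)$, $c=\alpha\beta^2(z)$ turns the claim into $[[a, b], c]=[[a, c], b]+[a, [b, c]]$, which is the right Leibniz identity (\ref{rightleibniz}).

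I do not expect a genuine obstacle; the argument is essentially bookkeeping. The one delicate point is the repeated use of $\alpha\beta=\beta\alpha$ together with the morphism property when sliding structure maps through brackets, since it is exactly these commutations that force the three arguments on each side into the matching pattern of the Leibniz identity. It is also worth noting that in the left case the elements $x, y, z$ pick up different total weights of structure maps ($\alpha^2\beta$, $\alpha\beta$ and $\beta^2$ respectively), so the substitution is not symmetric and the weights must be tracked carefully rather than guessed.
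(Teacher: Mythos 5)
Your proof is correct and follows essentially the same route as the paper: expand both sides of the BiHom-Leibniz identity via multiplicativity and $\alpha\beta=\beta\alpha$, then recognize the ordinary left Leibniz identity applied to $\alpha^2\beta(x)$, $\alpha\beta(y)$, $\beta^2(z)$ (exactly the substitution the paper uses). The only difference is that you also spell out the right-handed case and the multiplicativity check, which the paper leaves to the reader; your substitution $a=\alpha^2(x)$, $b=\alpha\beta(y)$, $c=\alpha\beta^2(z)$ there is correct.
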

\begin{proof}
We prove the case when $L$ is a left Leibniz algebra, the other case is similar and left to the reader. We compute:
\begin{eqnarray*}
\{\alpha \beta (x), \{y, z\}\}&=&\{\alpha \beta (x), [\alpha (y), \beta (z)]\}
=[\alpha ^2\beta (x), [\alpha \beta (y), \beta ^2(z)]],
\end{eqnarray*}
${\;\;\;\;\;}$$\{\{\beta (x), y\}, \beta (z)\}+\{\beta (y), \{\alpha (x), z\}\}$
\begin{eqnarray*}
&=&\{[\alpha \beta (x), \beta (y)], \beta (z)\}+\{\beta (y), [\alpha ^2(x), \beta (z)]\}\\
&=&[[\alpha ^2\beta (x), \alpha \beta (y)], \beta ^2(z)]+[\alpha \beta (y), [\alpha ^2\beta (x), \beta ^2(z)]],
\end{eqnarray*}
and the desired equality follows by applying (\ref{leftleibniz}) to the elements $\alpha ^2\beta (x)$,
$\alpha \beta (y)$, $\beta ^2(z)$.
\end{proof}
\begin{remark}
More generally, let $(L, [\cdot ,\cdot ], \alpha , \beta )$ be a left (respectively right) BiHom-Leibniz algebra and
$\alpha ', \beta ':L\rightarrow L$ two morphisms of BiHom-Leibniz algebras such that any two of the
maps $\alpha , \alpha ', \beta , \beta '$ commute. If we define the map $\{\cdot ,\cdot \}:L\times L\rightarrow L$ by
$\{x, y\}=[\alpha '(x), \beta '(y)]$, for all $x, y\in L$, then $(L, \{ \cdot , \cdot \}, \alpha \circ \alpha ', \beta \circ \beta ')$
is a left (respectively right)
BiHom-Leibniz algebra.
\end{remark}
\begin{remark}\label{leibnizleftright}
One can easily see that, if $(L, [\cdot ,\cdot ], \alpha , \beta )$ is a left (respectively right) BiHom-Leibniz algebra and
we define $\{ \cdot , \cdot \}:L\times L\rightarrow L$ by  $\{x, y\}:=[y, x]$, then $(L, \{ \cdot , \cdot \}, \beta , \alpha  )$ is a
right (respectively left) BiHom-Leibniz algebra.
\end{remark}
\begin{definition}
A left (respectively right) BiHom-Lie algebra is a left (respectively right) BiHom-Leibniz algebra
$(L, [\cdot ,\cdot ], \alpha , \beta )$ satisfying the BiHom-skew-symmetry condition
\begin{eqnarray}
&&[\beta (x), \alpha (y)]=-[\beta (y), \alpha (x)], \;\;\;\forall \;x, y\in L. \label{BHskewsym}
\end{eqnarray}
A morphism $f:( L,[\cdot ,\cdot ] ,\alpha ,\beta )\rightarrow
( L', [\cdot ,\cdot ]',\alpha ',\beta ')$ of BiHom-Lie algebras is a linear map $f:L\rightarrow
L'$ such that $\alpha '\circ f=f\circ \alpha $, $\beta '\circ f=f\circ \beta $ and $f([x, y])=[f(x),
f(y)]'$, for all $x, y\in L$.
\end{definition}
\begin{remark}
In view of Remark \ref{leibnizleftright}, if $(L, [\cdot ,\cdot ], \alpha , \beta )$ is a left (respectively right) BiHom-Lie algebra and
we define $\{ \cdot , \cdot \}:L\times L\rightarrow L$ by  $\{x, y\}:=[y, x]$, then $(L, \{ \cdot , \cdot \}, \beta , \alpha  )$ is a
right (respectively left) BiHom-Lie algebra.
\end{remark}
\begin{remark}
If $(L, [\cdot ,\cdot ])$ is a Lie algebra and $\alpha , \beta :L\rightarrow L$ are two commuting
morphisms of Lie algebras, and we define the map $\{ \cdot , \cdot \}:L\times L\rightarrow L$ by
$\{x, y\}=[\alpha (x), \beta (y)]$, then $(L, \{ \cdot , \cdot \}, \alpha , \beta )$ is a left and right
BiHom-Lie algebra, denoted by $L_{(\alpha , \beta )}$ and
called the Yau twist of $L$.
\end{remark}
\begin{remark}
More generally, let $(L, [\cdot ,\cdot ], \alpha , \beta )$ be a left (respectively right) BiHom-Lie algebra and
$\alpha ', \beta ':L\rightarrow L$ two morphisms of left (respectively right) BiHom-Lie algebras such that any two of the
maps $\alpha , \alpha ', \beta , \beta '$ commute. If we define the map $\{ \cdot , \cdot \}:L\times L\rightarrow L$,
$\{x, y\}=[\alpha '(x), \beta '(y)]$, then $(L, \{ \cdot , \cdot \}, \alpha \circ \alpha ', \beta \circ \beta ')$
is a left (respectively right)
BiHom-Lie algebra.
\end{remark}
\begin{proposition}\label{equivBHLie}
Let $L$ be a linear space, $[\cdot ,\cdot  ]:L\times L\rightarrow L$ a bilinear map, $\alpha , \beta :L\rightarrow L$
two commuting linear maps such that $\alpha ([x, y])=[\alpha (x), \alpha (y)]$ and
$\beta ([x, y])=[\beta (x), \beta (y)]$ for all $x, y\in L$, and the BiHom-skew-symmetry condition
(\ref{BHskewsym}) is satisfied. Assume that $\alpha $ and $\beta $ are bijective. Then
the following three conditions are equivalent: \\
(i) $(L, [\cdot ,\cdot ], \alpha , \beta )$ is a BiHom-Lie algebra; \\
(ii) $(L, [\cdot ,\cdot ], \alpha , \beta )$
 is a left BiHom-Lie algebra;\\
(iii) $(L, [\cdot ,\cdot ], \alpha , \beta )$
is a right BiHom-Lie algebra.
\end{proposition}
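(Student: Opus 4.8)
The plan is to reduce the whole statement to a single fact about three identities. All three structures in the proposition carry the \emph{same} bracket and the \emph{same} pair of structure maps, and they satisfy exactly the same standing hypotheses: $\alpha$ and $\beta$ commute, are bijective and multiplicative, and the BiHom-skew-symmetry condition (\ref{BHskewsym}) holds. Under these hypotheses, being a BiHom-Lie algebra is equivalent to the BiHom-Jacobi condition, being a left BiHom-Lie algebra is equivalent to the left BiHom-Leibniz identity (\ref{leftBHleibniz}), and being a right BiHom-Lie algebra is equivalent to the right BiHom-Leibniz identity (\ref{rightBHleibniz}). Hence it suffices to show that, assuming (\ref{BHskewsym}) together with the commutativity, bijectivity and multiplicativity of $\alpha,\beta$, each of (\ref{leftBHleibniz}) and (\ref{rightBHleibniz}) is equivalent to the BiHom-Jacobi condition.

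The computational engine is one auxiliary identity, valid for all $x,y,z\in L$ as a consequence of the multiplicativity of $\beta$ and of (\ref{BHskewsym}):
\[
[[\alpha^{-1}\beta^2(x),\beta(y)],\alpha\beta(z)]=-[\beta^2(z),[\beta(x),\alpha(y)]];
\]
indeed $[\alpha^{-1}\beta^2(x),\beta(y)]=\beta([\alpha^{-1}\beta(x),y])$, so the outer bracket has the form $[\beta(\,\cdot\,),\alpha(\,\cdot\,)]$ and (\ref{BHskewsym}) flips it, after which multiplicativity of $\alpha$ cleans up the result. For the left case I would then use bijectivity to replace $(x,y,z)$ by $(\alpha^{-1}\beta(x),\beta(y),\alpha(z))$ in (\ref{leftBHleibniz}); since this is a bijection of $L^{3}$, the substituted identity is equivalent to (\ref{leftBHleibniz}). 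Its left-hand side becomes exactly $[\beta^2(x),[\beta(y),\alpha(z)]]$, the first summand of the BiHom-Jacobi condition. On the right-hand side, the term coming from $[[\beta(x),y],\beta(z)]$ becomes $-[\beta^2(z),[\beta(x),\alpha(y)]]$ by the auxiliary identity, while the term coming from $[\beta(y),[\alpha(x),z]]$ becomes $-[\beta^2(y),[\beta(z),\alpha(x)]]$ after one application of (\ref{BHskewsym}) to its inner bracket. Moving both to the left produces precisely the BiHom-Jacobi condition; every step is an equality, so the chain is reversible and gives (\ref{leftBHleibniz}) $\Leftrightarrow$ BiHom-Jacobi.

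For the right case I proceed in the same spirit, substituting $(x,y,z)\mapsto(\alpha^{-1}\beta^2(x),\beta(y),z)$ in (\ref{rightBHleibniz}), again an invertible substitution, chosen so that the summand $[\alpha(x),[y,\alpha(z)]]$ becomes the BiHom-Jacobi term $[\beta^2(x),[\beta(y),\alpha(z)]]$. The remaining two summands $[[x,y],\alpha\beta(z)]$ and $[[x,\beta(z)],\alpha(y)]$ are of the shape handled by the auxiliary identity (the second being that identity with $y$ and $z$ interchanged); applying it, together with one further use of (\ref{BHskewsym}) on an inner bracket, again collects everything into the BiHom-Jacobi condition, and reversibility gives (\ref{rightBHleibniz}) $\Leftrightarrow$ BiHom-Jacobi. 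Combining the two equivalences shows that the three notions coincide; alternatively, the right case can be deduced from the left one via Remark~\ref{leibnizleftright} together with (\ref{BHskewsym}).

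The step I expect to be the main obstacle is the bookkeeping rather than any deep idea: one must pick the substitutions so that precisely one summand of each Leibniz identity lands on a BiHom-Jacobi term, and then apply (\ref{BHskewsym}) to the other summands the right number of times. A seductive but inferior route is to cyclically sum the substituted identity over $x,y,z$; this is legitimate, but only delivers $3J=0$ for the Jacobi expression $J$, hence fails in characteristic $3$. Using a single instance of the substituted Leibniz identity avoids any division and makes the equivalence hold over an arbitrary field. Finally, bijectivity of $\alpha$ and $\beta$ is genuinely needed twice, namely to carry out the substitutions and to guarantee that they, hence the whole argument, are reversible.
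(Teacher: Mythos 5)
Your proof is correct and runs on the same engine as the paper's own argument: reversible substitutions made possible by the bijectivity of $\alpha$ and $\beta$, combined with BiHom-skew-symmetry (\ref{BHskewsym}) and multiplicativity of the structure maps, turning each Leibniz-type identity into the BiHom-Jacobi condition. The only difference is organizational --- the paper chains the equivalences as Jacobi $\Leftrightarrow$ (\ref{leftBHleibniz}) $\Leftrightarrow$ (\ref{rightBHleibniz}) by a step-by-step rewriting, whereas you hub both (\ref{leftBHleibniz}) and (\ref{rightBHleibniz}) directly at the BiHom-Jacobi condition via one bulk substitution each and your auxiliary identity, which is the same computation repackaged.
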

\begin{proof}
We  first prove that the BiHom-Jacobi condition
\begin{eqnarray*}
&&[\beta ^2(x), [\beta (y), \alpha (z)]]+[\beta ^2(y), [\beta (z), \alpha (x)]]+[\beta ^2(z), [\beta (x), \alpha (y)]]=0
\end{eqnarray*}
is equivalent to (\ref{leftBHleibniz}). The BiHom-Jacobi condition may be rewritten in one of the following equivalent forms:
\begin{eqnarray*}
&&[\beta ^2(x), \beta ([y, \alpha \beta ^{-1}(z)])]+[\beta ^2(y), \beta ([z, \alpha \beta ^{-1}(x)])]+
[\beta ^2(z), \beta ([x, \alpha \beta ^{-1}(y)])]=0,
\end{eqnarray*}
\begin{eqnarray*}
&&[\beta (x), [y, \alpha \beta ^{-1}(z)]]+[\beta (y), [z, \alpha \beta ^{-1}(x)]]+
[\beta (z), [x, \alpha \beta ^{-1}(y)]]=0,
\end{eqnarray*}
\begin{eqnarray*}
&&[\beta (x), [y, \alpha \beta ^{-1}(z)]]+[\beta (y), [z, \alpha \beta ^{-1}(x)]]+
[\beta (z), \alpha ([\alpha ^{-1}(x), \beta ^{-1}(y)])]=0,
\end{eqnarray*}
or, by applying the BiHom-skew-symmetry condition to the third term in the last equation,
\begin{eqnarray*}
&&[\beta (x), [y, \alpha \beta ^{-1}(z)]]+[\beta (y), [z, \alpha \beta ^{-1}(x)]]-
[\beta ([\alpha ^{-1}(x), \beta ^{-1}(y)]), \alpha (z)]=0,
\end{eqnarray*}
which, by replacing $x$ with $\alpha (x)$, is equivalent to
\begin{eqnarray*}
&&[\alpha \beta (x), [y, \alpha \beta ^{-1}(z)]]+[\beta (y), [z, \alpha ^2\beta ^{-1}(x)]]-
[[\beta (x), y], \alpha (z)]=0,
\end{eqnarray*}
and this relation, by replacing $z$ with $\alpha ^{-1}\beta (z)$, may be rewritten as
\begin{eqnarray*}
&&[\alpha \beta (x), [y, z]]+[\beta (y), [\alpha ^{-1}\beta (z), \alpha ^2\beta ^{-1}(x)]]-
[[\beta (x), y], \beta (z)]=0,
\end{eqnarray*}
or equivalently
\begin{eqnarray*}
&&[\alpha \beta (x), [y, z]]+[\beta (y), [\beta (\alpha ^{-1}(z)), \alpha (\alpha \beta ^{-1}(x))]]-
[[\beta (x), y], \beta (z)]=0,
\end{eqnarray*}
which, by applying the BiHom-skew-symmetry condition to the second bracket in the second term, may be transformed as
\begin{eqnarray*}
&&[\alpha \beta (x), [y, z]]-[\beta (y), [\alpha (x), z]]-
[[\beta (x), y], \beta (z)]=0,
\end{eqnarray*}
and this is obviously equivalent to (\ref{leftBHleibniz}).

Now we prove that (\ref{leftBHleibniz}) is equivalent to (\ref{rightBHleibniz}). We begin with
(\ref{leftBHleibniz}), rewritten as
\begin{eqnarray*}
&&[\alpha \beta (x), [y, z]]=
[[\beta (x), y], \beta (z)]+ [\beta (y), \alpha ([x, \alpha ^{-1}(z)])],
\end{eqnarray*}
which, by applying the BiHom-skew-symmetry condition to the second term in the right hand side, becomes
\begin{eqnarray*}
&&[\alpha \beta (x), [y, z]]=
[[\beta (x), y], \beta (z)]-[\beta ([x, \alpha ^{-1}(z)]), \alpha (y)],
\end{eqnarray*}
or equivalently
\begin{eqnarray*}
&&[\alpha \beta (x), [y, z]]=
[[\beta (x), y], \beta (z)]-[[\beta (x), \alpha ^{-1}\beta (z)], \alpha (y)],
\end{eqnarray*}
which, by replacing $x$ with $\beta ^{-1}(x)$ and $z$ with $\alpha (z)$,  may be rewritten as
\begin{eqnarray*}
&&[\alpha (x), [y, \alpha (z)]]=
[[x, y], \alpha \beta (z)]-[[x, \beta (z)], \alpha (y)],
\end{eqnarray*}
and this is obviously equivalent to (\ref{rightBHleibniz}).
\end{proof}
\begin{remark} By using one more time the same type of calculations, one can easily prove that, under the
hypotheses of Proposition \ref{equivBHLie}, $(L, [\cdot ,\cdot ], \alpha , \beta )$ is a BiHom-Lie algebra if and only if,
for all $x, y, z\in L$, the following equation is satisfied:
\begin{eqnarray*}
&&[[\beta (x), \alpha (y)], \alpha ^2(z)]+[[\beta (y), \alpha (z)], \alpha ^2(x)]+
[[\beta (z), \alpha (x)], \alpha ^2(y)]=0.
\end{eqnarray*}
\end{remark}
\begin{proposition}\label{lrprelie}
Let $(L, [\cdot ,\cdot ], \alpha , \beta )$ be a left (respectively right) BiHom-Lie algebra and $R:L\rightarrow L$ be a
Rota-Baxter operator of weight $0$ such that $R\circ \alpha =\alpha \circ R$ and $R\circ \beta =\beta \circ R$.
Define a new operation on $L$ by
$x\cdot y:=[R(x), y]$
(respectively by
$x\cdot y:=[x, R(y)]$), for all $x, y\in L$.
Then $(L, \cdot , \alpha , \beta )$ is a left (respectively right) BiHom-pre-Lie algebra.
\end{proposition}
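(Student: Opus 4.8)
The plan is to verify the left BiHom-pre-Lie identity \eqref{lBHpL} directly for the new operation $x\cdot y=[R(x),y]$, using the left BiHom-Leibniz identity \eqref{leftBHleibniz} as the engine and the Rota-Baxter condition \eqref{RBrel} (of weight $0$) to handle the nested products of $R$. First I would record the compatibility conditions that make $(L,\cdot,\alpha,\beta)$ a candidate BiHom-pre-Lie algebra: $\alpha\beta=\beta\alpha$ is inherited, and multiplicativity $\alpha(x\cdot y)=\alpha(x)\cdot\alpha(y)$ (and similarly for $\beta$) follows since $\alpha([R(x),y])=[\alpha R(x),\alpha(y)]=[R\alpha(x),\alpha(y)]=\alpha(x)\cdot\alpha(y)$, using that $\alpha$ is a morphism for $[\cdot,\cdot]$ and that $R\circ\alpha=\alpha\circ R$. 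So the entire content of the proposition is the single identity \eqref{lBHpL}.

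To attack \eqref{lBHpL}, I would expand both sides into brackets. The left-hand side is
\begin{eqnarray*}
\alpha\beta(x)\cdot(\alpha(y)\cdot z)-(\beta(x)\cdot\alpha(y))\cdot\beta(z)
&=&[R\alpha\beta(x),[R\alpha(y),z]]-[R([R\beta(x),\alpha(y)]),\beta(z)].
\end{eqnarray*}
The key move is that the weight-$0$ Rota-Baxter condition for the bracket reads $[R(x),R(y)]=R([R(x),y]+[x,R(y)])$; I would use it to rewrite the single $R$ applied to a bracket in the second term, turning $[R([R\beta(x),\alpha(y)]),\beta(z)]$ into an expression of the form $[[R\beta(x),R\alpha(y)],\beta(z)]-[R([\beta(x),R\alpha(y)]),\beta(z)]$, so that the second piece reassembles as $(\beta(x)\cdot R\alpha(y))\cdot\beta(z)$ while the first piece is a genuine triple bracket. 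After the same treatment is applied to the symmetric right-hand side (with the roles of $x$ and $y$ exchanged), the difference LHS $-$ RHS collects into terms that are candidates for the left BiHom-Leibniz relation \eqref{leftBHleibniz}, read with the arguments $R\beta(x)$ (or $R\beta(y)$) in the first slot.

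The main obstacle I expect is bookkeeping: matching the structure maps so that the triple brackets produced after applying the Rota-Baxter step line up exactly with the pattern $[\alpha\beta(\cdot),[\cdot,\cdot]]=[[\beta(\cdot),\cdot],\beta(\cdot)]+[\beta(\cdot),[\alpha(\cdot),\cdot]]$ required by \eqref{leftBHleibniz}. Since $R$ commutes with both $\alpha$ and $\beta$, I can freely move the structure maps past $R$, and multiplicativity lets me distribute $\alpha,\beta$ over brackets; the hope is that \eqref{leftBHleibniz}, applied once to each of the two symmetric halves, converts the mixed triple brackets into terms that cancel against each other or against the $(\cdot)\cdot(\cdot)$-type leftovers, leaving the asserted equality. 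Concretely, I anticipate that both sides reduce, after one application of \eqref{leftBHleibniz} apiece, to a common symmetric expression, so that the difference vanishes — mirroring exactly how \eqref{lBHpL} was derived from \eqref{lpL} in the pre-Lie (non-twisted) setting, with \eqref{leftBHleibniz} playing the role that the left Leibniz identity plays there. The right BiHom-pre-Lie case is symmetric: one repeats the argument with $x\cdot y=[x,R(y)]$, using the right BiHom-Leibniz identity \eqref{rightBHleibniz} in place of \eqref{leftBHleibniz}, and I would leave that parallel computation to the reader.
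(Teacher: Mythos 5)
Your overall strategy (expand the dot into brackets, use the weight-zero Rota-Baxter identity (\ref{RBrel}) to split $R$ of a bracket, then invoke the left BiHom-Leibniz identity (\ref{leftBHleibniz})) is the same one the paper uses, but your plan has a genuine gap: it never invokes the BiHom-skew-symmetry condition (\ref{BHskewsym}), and without it the cancellation you are hoping for does not occur. Carrying your plan out: after applying (\ref{RBrel}) and (\ref{leftBHleibniz}) once to each side and using $R\circ\alpha=\alpha\circ R$, $R\circ\beta=\beta\circ R$, the left-hand side of (\ref{lBHpL}) becomes $[\alpha \beta (R(y)), [\alpha (R(x)), z]]+[R([\beta (x), \alpha (R(y))]), \beta (z)]$ and the right-hand side becomes the same expression with $x$ and $y$ interchanged. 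A further application of (\ref{leftBHleibniz}) together with (\ref{RBrel}) reduces their difference to
\begin{eqnarray*}
\mathrm{LHS}-\mathrm{RHS}=-\left[R\left([\beta (R(x)), \alpha (y)]+[\beta (y), \alpha (R(x))]\right), \beta (z)\right],
\end{eqnarray*}
and this is exactly where skew-symmetry must enter: by (\ref{BHskewsym}) one has $[\beta (y), \alpha (R(x))]=-[\beta (R(x)), \alpha (y)]$, so the argument of $R$ vanishes. Leibniz plus Rota-Baxter alone cannot finish the proof; indeed, the statement is false for genuine (non-skew-symmetric) left BiHom-Leibniz algebras, already for $\alpha =\beta =id_L$, where the leftover term is $-[R([R(x),y]+[y,R(x)]),z]$. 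This is precisely why the paper treats the Leibniz case separately (Proposition \ref{Proposition:Leibniz}), with a different product. So your "hope" that the mixed triple brackets cancel after one application of (\ref{leftBHleibniz}) per side is not realizable as stated: you must use the full left BiHom-Lie structure, not just the underlying BiHom-Leibniz one.

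For comparison, the paper's proof does not subtract the two sides at all: it transforms the left-hand side directly into the right-hand side, applying (\ref{RBrel}) once, then (\ref{leftBHleibniz}) once (which cancels two of the resulting triple brackets because $R$ commutes with $\alpha \beta $), and then crucially rewriting $[R([\beta (x), \alpha (R(y))]), \beta (z)]$ as $-[R([\beta (R(y)), \alpha (x)]), \beta (z)]$ via (\ref{BHskewsym}); after commuting $R$ past the structure maps this is literally the right-hand side. A small additional slip in your write-up: $[R([\beta (x),R(\alpha (y))]),\beta (z)]$ does not "reassemble" as $(\beta (x)\cdot R(\alpha (y)))\cdot \beta (z)$, since the dot operation applies $R$ to its first argument, so $(\beta (x)\cdot R(\alpha (y)))\cdot \beta (z)=[R([R(\beta (x)),R(\alpha (y))]),\beta (z)]$; this is secondary, but it signals that the bookkeeping you deferred is exactly where the missing hypothesis would have been needed.
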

\begin{proof}
We prove the left-handed version and leave the right-handed one to the reader. It is obvious that
$\alpha (x\cdot y)=\alpha (x)\cdot \alpha (y)$ and $\beta (x\cdot y)=\beta (x)\cdot \beta (y)$,
so we only need to prove the relation (\ref{lBHpL}) defining a left BiHom-pre-Lie algebra. We compute: \\[2mm]
${\;\;\;\;\;}$$\alpha \beta (x)\cdot (\alpha (y)\cdot z)-(\beta (x)\cdot \alpha (y))\cdot \beta (z)$
\begin{eqnarray*}
&=&\alpha \beta (x)\cdot [R(\alpha (y)), z]-[R(\beta (x)), \alpha (y)]\cdot \beta (z)\\
&=&[R(\alpha \beta (x)), [R(\alpha (y)), z]]-[R([R(\beta (x)), \alpha (y)]), \beta (z)]\\
&\overset{(\ref{RBrel})}{=}&[R(\alpha \beta (x)), [R(\alpha (y)), z]]-[[R(\beta (x)), R(\alpha (y))], \beta (z)]+
[R([\beta (x), R(\alpha (y))]), \beta (z)]\\
&=&[R(\alpha \beta (x)), [R(\alpha (y)), z]]-[[\beta (R(x)), R(\alpha (y))], \beta (z)]+
[R([\beta (x), R(\alpha (y))]), \beta (z)]\\
&\overset{(\ref{leftBHleibniz})}{=}&[R(\alpha \beta (x)), [R(\alpha (y)), z]]-[\alpha \beta (R(x)), [R(\alpha (y)), z]]+
[\alpha \beta (R(y)), [\alpha (R(x)), z]]\\
&&+[R([\beta (x), R(\alpha (y))]), \beta (z)]\\
&=&[\alpha \beta (R(y)), [\alpha (R(x)), z]]+[R([\beta (x), \alpha (R(y))]), \beta (z)]\\
&\overset{(\ref{BHskewsym})}{=}&[\alpha \beta (R(y)), [\alpha (R(x)), z]]-[R([\beta (R(y)), \alpha (x)]), \beta (z)]\\
&=&[R(\alpha \beta (y)), [R(\alpha (x)), z]]-[R([R(\beta (y)), \alpha (x)]), \beta (z)]\\
&=&\alpha \beta (y)\cdot (\alpha (x)\cdot z)-(\beta (y)\cdot \alpha (x))\cdot \beta (z),
\end{eqnarray*}
finishing the proof.
\end{proof}
\begin{lemma}
\label{Lemma:preR}We consider a 4-tuple $\left( L,\left[\cdot , \cdot \right]
,\alpha ,\beta \right) $, where $L$ is a linear space,
$\alpha , \beta :L\rightarrow L$ are linear maps and $\left[\cdot , \cdot \right]
:L\times L\rightarrow L$ is a bilinear map. Let also $\lambda \in \Bbbk $ be a fixed scalar
and  $R:L\rightarrow L$ be a linear map such that
\begin{equation}
R\circ \alpha =\alpha \circ R\text{ and }R\circ \beta =\beta \circ R.
\label{Ralpha}
\end{equation}%
Define a new multiplication on $L$ by
\begin{equation*}
\{x,y\}=[R(x),y]+[x,R(y)]+\lambda \lbrack x,y],\;\;\;\forall \;x,y\in L.
\end{equation*}%
Then

\begin{itemize}
\item[i)] If $\alpha $ and $\beta $ satisfy%
\begin{equation}
\alpha (\left[ x,y\right] )=\left[ \alpha \left( x\right) ,\alpha \left(
y\right) \right] \;\;\text{ and }\;\;\beta (\left[ x,y\right] )=\left[ \beta
\left( x\right) ,\beta \left( y\right) \right] ,\;\;\;\forall \;x,y\in L,
\label{alphapar}
\end{equation}%
then they also satisfy%
\begin{equation*}
\alpha (\left\{ x,y\right\} )=\left\{ \alpha \left( x\right) ,\alpha \left(
y\right) \right\} \;\;\text{ and }\;\;\beta (\left\{ x,y\right\} )=\left\{
\beta \left( x\right) ,\beta \left( y\right) \right\}, \;\;\;\forall
\;x,y\in L.
\end{equation*}

\item[ii)] If $\alpha $ and $\beta $ satisfy%
\begin{equation}
\left[ \beta \left( x\right) ,\alpha \left( y\right) \right] =-\left[ \beta
\left( y\right) ,\alpha \left( x\right) \right] ,\;\;\;\;\;\;\forall
\;x,y\in L,  \label{skew}
\end{equation}%
then they also satisfy%
\begin{equation*}
\left\{ \beta \left( x\right) ,\alpha \left( y\right) \right\} =-\left\{
\beta \left( y\right) ,\alpha \left( x\right) \right\}, \;\;\;\forall
\;x,y\in L.
\end{equation*}

\item[iii)] If $R$ satisfies
\begin{eqnarray}
\lbrack R(x),R(y)] &=&R([R(x),y]+[x,R(y)]+\lambda \lbrack
x,y]),\;\;\;\forall \;x,y\in L,  \label{Rota}
\end{eqnarray}%
then%
\begin{equation}
R\left( \{x,y\}\right) =[R(x),R(y)],\;\;\;\forall \;x,y\in L.  \label{Rgraf}
\end{equation}
\end{itemize}
\end{lemma}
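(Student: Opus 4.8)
The plan is to verify each of the three parts by a direct computation, exploiting the fact that the new bracket $\{\cdot ,\cdot \}$ is, in each of its two slots, a fixed linear combination of terms built from the old bracket $[\cdot ,\cdot ]$ and the map $R$. The single structural input common to all three parts is that $R$ commutes with $\alpha $ and $\beta $ (relation (\ref{Ralpha})): in each case the decisive move is to push $R$ past $\alpha $ or $\beta $ by means of (\ref{Ralpha}), after which the relevant hypothesis on $[\cdot ,\cdot ]$ can be applied term by term.

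For part i), I would expand $\alpha (\{x,y\})=\alpha ([R(x),y])+\alpha ([x,R(y)])+\lambda \,\alpha ([x,y])$ by linearity, apply the multiplicativity hypothesis (\ref{alphapar}) to each of the three brackets, and then use $\alpha \circ R=R\circ \alpha $ to rewrite $[\alpha R(x),\alpha (y)]$ as $[R\alpha (x),\alpha (y)]$ and similarly for the middle term; the result is exactly $\{\alpha (x),\alpha (y)\}$. The computation for $\beta $ is word for word the same, using $\beta \circ R=R\circ \beta $.

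For part ii), I would expand $\{\beta (x),\alpha (y)\}=[R\beta (x),\alpha (y)]+[\beta (x),R\alpha (y)]+\lambda \,[\beta (x),\alpha (y)]$, replace $R\beta $ by $\beta R$ and $R\alpha $ by $\alpha R$ via (\ref{Ralpha}), and then apply the skew-symmetry hypothesis (\ref{skew}) to each of the three resulting brackets (taking the arguments $R(x),y$, then $x,R(y)$, then $x,y$ in the roles of the two variables of (\ref{skew})). Expanding $\{\beta (y),\alpha (x)\}$ in the same fashion and comparing, one finds that the three summands just obtained coincide, up to their order, with the negatives of the three summands of $\{\beta (y),\alpha (x)\}$, which is precisely the desired identity. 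Part iii) is then immediate: the element $\{x,y\}=[R(x),y]+[x,R(y)]+\lambda \,[x,y]$ is exactly the argument appearing under $R$ on the right-hand side of the Rota-Baxter condition (\ref{Rota}), so (\ref{Rgraf}) is nothing but a rewriting of (\ref{Rota}) and requires no further work.

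No genuine obstacle arises in this lemma; the only point demanding care is the bookkeeping in part ii), where one must correctly identify which element plays the role of each variable in the skew-symmetry relation (\ref{skew}) and keep track of the fact that the three terms reappear in a permuted order once the two arguments $\beta (x)$ and $\alpha (y)$ are interchanged.
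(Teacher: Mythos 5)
Your proposal is correct and follows essentially the same route as the paper's own proof: expand the new bracket $\{\cdot,\cdot\}$ by its definition, use $(\ref{Ralpha})$ to slide $R$ past $\alpha$ and $\beta$, and then apply $(\ref{alphapar})$ or $(\ref{skew})$ term by term, with iii) being an immediate rewriting of $(\ref{Rota})$. The only cosmetic difference is that in part i) you transform $\alpha(\{x,y\})$ into $\{\alpha(x),\alpha(y)\}$ while the paper runs the same computation in the opposite direction, and your bookkeeping in part ii) (the first two summands swapping places under the sign change) matches the paper exactly.
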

\begin{proof}
i) We compute
\begin{eqnarray*}
\left\{ \alpha \left( x\right) ,\alpha \left( y\right) \right\} &=&[R(\alpha
\left( x\right) ),\alpha \left( y\right) ]+[\alpha \left( x\right) ,R(\alpha
\left( y\right) )]+\lambda \lbrack \alpha \left( x\right) ,\alpha \left(
y\right) ] \\
&\overset{\left( \ref{Ralpha}\right) }{=}&[\alpha (R\left( x\right) ),\alpha
\left( y\right) ]+[\alpha \left( x\right) ,\alpha (R\left( y\right)
)]+\lambda \lbrack \alpha \left( x\right) ,\alpha \left( y\right) ] \\
&\overset{\left( \ref{alphapar}\right) }{=}&\alpha (\left\{ x,y\right\} ).
\end{eqnarray*}

ii) We compute
\begin{eqnarray*}
\left\{ \beta \left( x\right) ,\alpha \left( y\right) \right\} &=&[R\left(
\beta (x)\right) ,\alpha \left( y\right) ]+[\beta \left( x\right) ,R(\alpha
\left( y\right) )]+\lambda \lbrack \beta \left( x\right) ,\alpha \left(
y\right) ] \\
&\overset{\left( \ref{Ralpha}\right) }{=}&[\beta \left( R(x)\right) ,\alpha
\left( y\right) ]+\left[ \beta \left( x\right) ,\alpha (R\left( y\right) )%
\right] +\lambda \lbrack \beta \left( x\right) ,\alpha \left( y\right) ] \\
&\overset{\left( \ref{skew}\right) }{=}&-[\beta \left( y\right) ,\alpha
\left( R(x)\right) ]-\left[ \beta \left( R\left( y\right) \right) ,\alpha (x)%
\right] -\lambda \left[ \beta \left( y\right) ,\alpha \left( x\right) \right]
\\
&\overset{\left( \ref{Ralpha}\right) }{=}&-[\beta \left( y\right) ,R(\alpha
\left( x\right) )]-[R\left( \beta (y)\right) ,\alpha \left( x\right)
]-\lambda \left[ \beta \left( y\right) ,\alpha \left( x\right) \right] \\
&=&-\left\{ \beta \left( y\right) ,\alpha \left( x\right) \right\} .
\end{eqnarray*}

iii) It is obvious by (\ref{Rota}) and the definition of $\{ \cdot , \cdot \}$.
\end{proof}

\begin{proposition}
\label{Proposition:BiHLie}Let $\left( L,\left[\cdot , \cdot \right]
,\alpha ,\beta \right) $ be a BiHom-Lie
algebra, $\lambda \in \Bbbk $ a fixed scalar and
$R:L\rightarrow L$ be a Rota-Baxter operator of weight $\lambda $ satisfying $R\circ \alpha =\alpha \circ R$ and
$R\circ \beta =\beta \circ R$ (i.e. $%
\left( \ref{Ralpha}\right) $ and $\left( \ref{Rota}\right) $ hold). Define a
new multiplication on $L$ by
\begin{equation*}
\{x,y\}=[R(x),y]+[x,R(y)]+\lambda \lbrack x,y],\;\;\;\forall \;x,y\in L.
\end{equation*}%
Then $(L,\left\{ {\cdot , \cdot }\right\} ,\alpha ,\beta )$ is a BiHom-Lie algebra.
\end{proposition}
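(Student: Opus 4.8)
The plan is to verify one by one the four defining conditions of a BiHom-Lie algebra for $(L,\{\cdot,\cdot\},\alpha,\beta)$. The compatibility $\alpha\circ\beta=\beta\circ\alpha$ is inherited from $L$. Since $(L,[\cdot,\cdot],\alpha,\beta)$ is a BiHom-Lie algebra and $R$ obeys (\ref{Ralpha}) and (\ref{Rota}), the hypotheses (\ref{alphapar}) and (\ref{skew}) of Lemma \ref{Lemma:preR} are satisfied; hence parts i) and ii) of that lemma directly yield the multiplicativity of $\alpha$ and $\beta$ with respect to $\{\cdot,\cdot\}$ and the BiHom-skew-symmetry of $\{\cdot,\cdot\}$. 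The only nontrivial axiom that remains, and the main obstacle, is the BiHom-Jacobi condition $\circlearrowleft_{x,y,z}\{\beta^2(x),\{\beta(y),\alpha(z)\}\}=0$.

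To establish it, I would first expand the inner product $\{\beta(y),\alpha(z)\}$ and then the outer product $\{\beta^2(x),-\}$ using the definition of $\{\cdot,\cdot\}$. The delicate summand is $[\beta^2(x),R(\{\beta(y),\alpha(z)\})]$, coming from the middle term $[x,R(y)]$ of the definition; here I would invoke the graph identity (\ref{Rgraf}) of Lemma \ref{Lemma:preR} iii) together with (\ref{Ralpha}) to rewrite $R(\{\beta(y),\alpha(z)\})=[\beta(R(y)),\alpha(R(z))]$. After this substitution, and since $R$ commutes with $\alpha$ and $\beta$ so that every occurrence of $R$ can be slid onto one of the variables $x,y,z$, the expression $\{\beta^2(x),\{\beta(y),\alpha(z)\}\}$ becomes a sum of seven iterated $[\cdot,\cdot]$-brackets, which I would sort by the power of $\lambda$ they carry.

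Writing
\[
J(a,b,c):=[\beta^2(a),[\beta(b),\alpha(c)]]+[\beta^2(b),[\beta(c),\alpha(a)]]+[\beta^2(c),[\beta(a),\alpha(b)]]
\]
for the BiHom-Jacobiator, the BiHom-Jacobi condition of $(L,[\cdot,\cdot])$ reads $J(a,b,c)=0$ for all $a,b,c\in L$. Taking the cyclic sum $\circlearrowleft_{x,y,z}$ of the seven terms, the $\lambda^{2}$-part is $\lambda^{2}J(x,y,z)$; the $\lambda^{1}$-part, which collects the three terms carrying a single $R$, reassembles --- according to the variable on which the surviving $R$ sits --- into $\lambda\big(J(R(x),y,z)+J(R(y),z,x)+J(R(z),x,y)\big)$; and the $\lambda^{0}$-part, which collects the three terms carrying two $R$'s (one of them produced through (\ref{Rgraf})), reassembles --- according to which variable is free of $R$ --- into $J(R(x),R(y),z)+J(R(y),R(z),x)+J(R(z),R(x),y)$. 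Each of these seven Jacobiators vanishes, so the full cyclic sum is $0$.

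The genuine difficulty here is bookkeeping rather than conceptual: for each of the twenty-one terms of the cyclic sum one must keep track of the exact powers of $\alpha$ and $\beta$ attached to the three slots and of where the $R$'s land, and then recognize the regrouping into Jacobiators. What makes the regrouping go through cleanly is precisely that $R$ commutes with both structure maps and is multiplicative, so that once $R$ is slid onto the variables the $\alpha,\beta$-decorations of the three slots coincide exactly with those of a genuine BiHom-Jacobiator $J$ evaluated at the appropriate $R$-twisted triple. Once this identification is made, the BiHom-Jacobi identity of $L$, applied seven times, closes the argument.
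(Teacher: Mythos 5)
Your proposal is correct and follows essentially the same route as the paper's proof: the easy axioms via Lemma \ref{Lemma:preR}, then expansion of each $\{\beta^2(x),\{\beta(y),\alpha(z)\}\}$ into seven brackets using (\ref{Rgraf}) and (\ref{Ralpha}), and regrouping of the resulting twenty-one terms of the cyclic sum into seven BiHom-Jacobiators (graded by powers of $\lambda$), each vanishing by the BiHom-Jacobi identity of $[\cdot,\cdot]$. Your identification of the seven Jacobiators, namely $\lambda^2 J(x,y,z)$, the three $\lambda$-weighted ones with a single $R$, and the three $\lambda$-free ones with two $R$'s, matches the paper's grouping exactly (up to cyclic relabeling of the arguments).
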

\begin{proof}
In view of Lemma \ref{Lemma:preR}, we only need to prove that $\left\{
{\cdot , \cdot }\right\} $ satisfies the BiHom-Jacobi condition. Note also that, by Lemma %
\ref{Lemma:preR} again, $\left( \ref{Rgraf}\right) $ holds. We compute, for $x, y, z, t\in L$,
\begin{eqnarray*}
&&\left\{ \beta \left( x\right) ,\alpha \left( z\right) \right\} =[R(\beta
\left( x\right) ),\alpha \left( z\right) ]+[\beta \left( x\right) ,R(\alpha
\left( z\right) )]+\lambda \lbrack \beta \left( x\right) ,\alpha \left(
z\right) ],\\
&&\left\{ R(\beta \left( y\right)) ,\alpha \left( z\right) \right\} =[R(R(\beta
\left( y\right))),\alpha \left( z\right) ]+[R(\beta \left( y\right)) ,R(\alpha
\left( z\right) )]+\lambda \lbrack R(\beta \left( y\right)) ,\alpha \left(
z\right) ], \\
&&\left\{ \beta ^{2}\left( x\right) ,t\right\} =[R(\beta ^{2}(x)),t]+[\beta
^{2}(x),R(t)]+\lambda \lbrack \beta ^{2}(x),t].
\end{eqnarray*}
So, we obtain \\[2mm]
${\;\;\;}$
$\left\{ \beta ^{2}\left( x\right) ,\left\{ \beta \left( y\right) ,\alpha
\left( z\right) \right\} \right\}$
\begin{eqnarray*}
&=&[R(\beta ^{2}(x)),\left\{ \beta \left(
y\right) ,\alpha \left( z\right) \right\} ]+[\beta ^{2}(x),R(\left\{ \beta
\left( y\right) ,\alpha \left( z\right) \right\} )]+\lambda \lbrack \beta
^{2}(x),\left\{ \beta \left( y\right) ,\alpha \left( z\right) \right\} ] \\
&\overset{\left( \ref{Rgraf}\right) }{=}&[R(\beta ^{2}(x)),\left\{ \beta
\left( y\right) ,\alpha \left( z\right) \right\} ]+[\beta ^{2}(x),\left[
R(\beta \left( y\right)) ,R(\alpha \left( z\right)) \right] ]+\lambda \lbrack
\beta ^{2}(x),\left\{ \beta \left( y\right) ,\alpha \left( z\right) \right\} ]
\\
&=&[R(\beta ^{2}\left( x\right) ),[R(\beta \left( y\right) ),\alpha \left(
z\right) ]]+[R(\beta ^{2}\left( x\right) ),[\beta \left( y\right) ,R(\alpha
\left( z\right) )]]\\
&&
+[R(\beta ^{2}\left( x\right) ),\lambda \lbrack \beta
\left( y\right) ,\alpha \left( z\right) ]]+
[\beta ^{2}(x),\left[ R(\beta \left( y\right)) ,R(\alpha \left( z\right)) %
\right] ]\\
&&+\lambda \lbrack \beta ^{2}\left( x\right) ,[R(\beta \left( y\right)
),\alpha \left( z\right) ]]+\lambda \lbrack \beta ^{2}\left( x\right)
,[\beta \left( y\right) ,R(\alpha \left( z\right) )]]\\
&&+\lambda \lbrack \beta
^{2}\left( x\right) ,\lambda \lbrack \beta \left( y\right) ,\alpha \left(
z\right) ]],
\end{eqnarray*}%
and hence\\[2mm]
${\;\;\;\;\;\;}$
$\left\{ \beta ^{2}\left( y\right) ,\left\{ \beta \left( z\right) ,\alpha
\left( x\right) \right\} \right\}$
\begin{eqnarray*}
&=&[R(\beta ^{2}\left( y\right) ),[R(\beta
\left( z\right) ),\alpha \left( x\right) ]]+[R(\beta ^{2}\left( y\right)
),[\beta \left( z\right) ,R(\alpha \left( x\right) )]]\\
&&+[R(\beta ^{2}\left(
y\right) ),\lambda \lbrack \beta \left( z\right) ,\alpha \left( x\right) ]]+
[\beta ^{2}(y),\left[ R(\beta \left( z\right)) ,R(\alpha \left( x\right)) %
\right] ] \\
&&+\lambda \lbrack \beta ^{2}\left( y\right) ,[R(\beta \left( z\right)
),\alpha \left( x\right) ]]+\lambda \lbrack \beta ^{2}\left( y\right)
,[\beta \left( z\right) ,R(\alpha \left( x\right) )]]\\
&&+\lambda \lbrack \beta
^{2}\left( y\right) ,\lambda \lbrack \beta \left( z\right) ,\alpha \left(
x\right) ]],
\end{eqnarray*}%
${\;\;\;\;\;\;}$
$\left\{ \beta ^{2}\left( z\right) ,\left\{ \beta \left( x\right) ,\alpha
\left( y\right) \right\} \right\}$
\begin{eqnarray*}
&=&[R(\beta ^{2}\left( z\right) ),[R(\beta
\left( x\right) ),\alpha \left( y\right) ]]+[R(\beta ^{2}\left( z\right)
),[\beta \left( x\right) ,R(\alpha \left( y\right) )]]\\
&&+[R(\beta ^{2}\left(
z\right) ),\lambda \lbrack \beta \left( x\right) ,\alpha \left( y\right) ]]+
[\beta ^{2}(z),\left[ R(\beta \left( x\right)) ,R(\alpha \left( y\right)) %
\right] ]+ \\
&&+\lambda \lbrack \beta ^{2}\left( z\right) ,[R(\beta \left( x\right)
),\alpha \left( y\right) ]]+\lambda \lbrack \beta ^{2}\left( z\right)
,[\beta \left( x\right) ,R(\alpha \left( y\right) )]]\\
&&+\lambda \lbrack \beta
^{2}\left( z\right) ,\lambda \lbrack \beta \left( x\right) ,\alpha \left(
y\right) ]].
\end{eqnarray*}%
Thus, we get \\[2mm]
${\;}$
$\left\{ \beta ^{2}\left( x\right) ,\left\{ \beta \left( y\right) ,\alpha
\left( z\right) \right\} \right\} +\left\{ \beta ^{2}\left( y\right)
,\left\{ \beta \left( z\right) ,\alpha \left( x\right) \right\} \right\}
+\left\{ \beta ^{2}\left( z\right) ,\left\{ \beta \left( x\right) ,\alpha
\left( y\right) \right\} \right\}$
\begin{eqnarray*}
&=&[R(\beta ^{2}\left( x\right) ),[R(\beta \left( y\right) ),\alpha \left(
z\right) ]]+[R(\beta ^{2}\left( x\right) ),[\beta \left( y\right) ,R(\alpha
\left( z\right) )]]\\
&&+[R(\beta ^{2}\left( x\right) ),\lambda \lbrack \beta
\left( y\right) ,\alpha \left( z\right) ]]+
[\beta ^{2}(x),\left[ R(\beta \left( y\right)) ,R(\alpha \left( z\right)) %
\right] ]\\
&&+\lambda \lbrack \beta ^{2}\left( x\right) ,[R(\beta \left( y\right)
),\alpha \left( z\right) ]]+\lambda \lbrack \beta ^{2}\left( x\right)
,[\beta \left( y\right) ,R(\alpha \left( z\right) )]]+\lambda \lbrack \beta
^{2}\left( x\right) ,\lambda \lbrack \beta \left( y\right) ,\alpha \left(
z\right) ]]\\
&&+[R(\beta ^{2}\left( y\right) ),[R(\beta \left( z\right) ),\alpha \left(
x\right) ]]+[R(\beta ^{2}\left( y\right) ),[\beta \left( z\right) ,R(\alpha
\left( x\right) )]]\\
&&+[R(\beta ^{2}\left( y\right) ),\lambda \lbrack \beta
\left( z\right) ,\alpha \left( x\right) ]]+
[\beta ^{2}(y),\left[ R(\beta \left( z\right)) ,R(\alpha \left( x\right)) %
\right] ] \\
&&+\lambda \lbrack \beta ^{2}\left( y\right) ,[R(\beta \left( z\right)
),\alpha \left( x\right) ]]+\lambda \lbrack \beta ^{2}\left( y\right)
,[\beta \left( z\right) ,R(\alpha \left( x\right) )]]+\lambda \lbrack \beta
^{2}\left( y\right) ,\lambda \lbrack \beta \left( z\right) ,\alpha \left(
x\right) ]]\\
&&+[R(\beta ^{2}\left( z\right) ),[R(\beta \left( x\right) ),\alpha \left(
y\right) ]]+[R(\beta ^{2}\left( z\right) ),[\beta \left( x\right) ,R(\alpha
\left( y\right) )]]\\
&&+[R(\beta ^{2}\left( z\right) ),\lambda \lbrack \beta
\left( x\right) ,\alpha \left( y\right) ]]+
[\beta ^{2}(z),\left[ R(\beta \left( x\right)) ,R(\alpha \left( y\right)) %
\right] ]+ \\
&&+\lambda \lbrack \beta ^{2}\left( z\right) ,[R(\beta \left( x\right)
),\alpha \left( y\right) ]]+\lambda \lbrack \beta ^{2}\left( z\right)
,[\beta \left( x\right) ,R(\alpha \left( y\right) )]]+\lambda \lbrack \beta
^{2}\left( z\right) ,\lambda \lbrack \beta \left( x\right) ,\alpha \left(
y\right) ]],
\end{eqnarray*}
i.e., in view of  (\ref{Ralpha}), \\[2mm]
${\;}$
$\left\{ \beta ^{2}\left( x\right) ,\left\{ \beta \left( y\right) ,\alpha
\left( z\right) \right\} \right\} +\left\{ \beta ^{2}\left( y\right)
,\left\{ \beta \left( z\right) ,\alpha \left( x\right) \right\} \right\}
+\left\{ \beta ^{2}\left( z\right) ,\left\{ \beta \left( x\right) ,\alpha
\left( y\right) \right\} \right\}$
\begin{eqnarray*}
&=&[\beta ^{2}(R\left( x\right) ),[\beta (R(y)),\alpha \left(
z\right) ]]+[\beta ^{2}\left( R(x\right) ),[\beta \left( y\right) ,\alpha
(R(z))]]+[\beta ^{2}(z),\left[ \beta \left( R\left( x\right)
\right) ,\alpha \left( R\left( y\right) \right) \right] ] \\
&&+[\beta ^{2}\left( R(y\right) ),[\beta \left( R(z\right) ),\alpha \left(
x\right) ]]+[\beta ^{2}(y),\left[\beta \left( R(z)\right) ,\alpha \left(
R(x)\right) \right] ]\\
&&+[\beta ^{2}\left( R(y\right) ),[\beta \left( z\right)
,\alpha \left( R(x\right) )]]
+[\beta ^{2}(x),\left[ \beta \left( R\left( y\right) \right) ,\alpha \left(
R\left( z\right) \right) \right] ]\\
&&+[\beta ^{2}(R\left( z\right) ),[\beta
\left( R(x\right) ),\alpha \left( y\right) ]]+[\beta ^{2}\left( R(z\right)
),[\beta \left( x\right) ,\alpha \left( R(y\right) )]] \\
&&+\lambda \lbrack \beta ^{2}\left( x\right) ,[\beta \left( R(y\right)
),\alpha \left( z\right) ]]+[\beta ^{2}\left( R(y\right) ),\lambda \lbrack
\beta \left( z\right) ,\alpha \left( x\right) ]]+\lambda \lbrack \beta
^{2}\left( z\right) ,[\beta \left( x\right) ,\alpha (R(y))]] \\
&&+\lambda \lbrack \beta ^{2}\left( x\right) ,[\beta \left( y\right) ,\alpha
\left( R(z\right) )]]+\lambda \lbrack \beta ^{2}\left( y\right) ,[\beta
\left( R(z\right) ),\alpha \left( x\right) ]]+[\beta ^{2}\left( R(z\right)
),\lambda \lbrack \beta \left( x\right) ,\alpha \left( y\right) ]] \\
&&+[\beta ^{2}\left( R(x\right) ),\lambda \lbrack \beta \left( y\right)
,\alpha \left( z\right) ]]+\lambda \lbrack \beta ^{2}\left( y\right) ,[\beta
\left( z\right) ,\alpha \left( R(x\right) )]]+\lambda \lbrack \beta
^{2}\left( z\right) ,[\beta \left( R(x\right) ),\alpha \left( y\right) ]] \\
&&+\lambda \lbrack \beta ^{2}\left( x\right) ,\lambda \lbrack \beta \left(
y\right) ,\alpha \left( z\right) ]]+\lambda \lbrack \beta ^{2}\left( y\right)
,\lambda \lbrack \beta \left( z\right) ,\alpha \left( x\right) ]]+\lambda
\lbrack \beta ^{2}\left( z\right) ,\lambda \lbrack \beta \left( x\right)
,\alpha \left( y\right) ]]\\
&=&\circlearrowleft _{R(x), R(y), z}[\beta ^2(R(x)), [\beta (R(y)), \alpha (z)]]+
\circlearrowleft _{R(y), R(z), x}[\beta ^2(R(y)), [\beta (R(z)), \alpha (x)]]\\
&&+\circlearrowleft _{R(z), R(x), y}[\beta ^2(R(z)), [\beta (R(x)), \alpha (y)]]+
\lambda \circlearrowleft _{x, R(y), z}[\beta ^2(x), [\beta (R(y)), \alpha (z)]]\\
&&+\lambda \circlearrowleft _{y, R(z), x}[\beta ^2(y), [\beta (R(z)), \alpha (x)]]+
\lambda \circlearrowleft _{z, R(x), y}[\beta ^2(z), [\beta (R(x)), \alpha (y)]]\\
&&+\lambda ^2\circlearrowleft _{x, y, z}[\beta ^2(x), [\beta (y), \alpha (z)]]\\
&=&0,
\end{eqnarray*}
where the last equality follows by applying 7 times the BiHom-Jacobi condition for $[\cdot ,\cdot  ]$.
\end{proof}

By taking $\alpha =\beta =id_{L}$ in  Proposition \ref{Proposition:BiHLie}, we obtain the following well-known result:

\begin{corollary} (\cite{bai})
Let $(L,[\cdot ,\cdot ])$ be a Lie algebra, $\lambda \in \Bbbk $ a fixed scalar and $R:L\rightarrow L$ a Rota-Baxter operator
of weight $\lambda $.
Define a new multiplication on $L$ by
\begin{equation*}
\{x,y\}=[R(x),y]+[x,R(y)]+\lambda \lbrack x,y],\;\;\;\forall \;x,y\in L.
\end{equation*}%
Then $(L,\{ \cdot , \cdot \})$ is also a Lie algebra (and of course we have $%
R(\{x,y\})=[R(x),R(y)]$).
\end{corollary}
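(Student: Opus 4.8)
The plan is to derive this corollary as the special case of Proposition \ref{Proposition:BiHLie} in which both structure maps are the identity. First I would observe that an ordinary Lie algebra $(L, [\cdot, \cdot])$ is exactly the same thing as a BiHom-Lie algebra with structure maps $\alpha = \beta = id_L$. Indeed, under this substitution the compatibility $\alpha \circ \beta = \beta \circ \alpha$ and the two multiplicativity conditions in Definition \ref{Def-BiHomLie} become tautologies, the BiHom-skew-symmetry relation $[\beta(x), \alpha(y)] = -[\beta(y), \alpha(x)]$ collapses to the usual skew-symmetry $[x,y] = -[y,x]$, and the BiHom-Jacobi condition collapses to the ordinary Jacobi identity $[x,[y,z]] + [y,[z,x]] + [z,[x,y]] = 0$.

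Next I would check that, with $\alpha = \beta = id_L$, all the hypotheses of Proposition \ref{Proposition:BiHLie} are automatically met by the data of the corollary. The commutation requirements $R \circ \alpha = \alpha \circ R$ and $R \circ \beta = \beta \circ R$, i.e. condition (\ref{Ralpha}), hold trivially since $\alpha = \beta = id_L$. The hypothesis that $R$ be a Rota-Baxter operator of weight $\lambda$ is exactly condition (\ref{Rota}). Finally, the twisted bracket $\{x, y\} = [R(x), y] + [x, R(y)] + \lambda[x,y]$ of the corollary coincides verbatim with the new multiplication defined in Proposition \ref{Proposition:BiHLie}.

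Applying Proposition \ref{Proposition:BiHLie} then yields that $(L, \{\cdot, \cdot\}, id_L, id_L)$ is a BiHom-Lie algebra; by the identification from the first paragraph, this is precisely the assertion that $(L, \{\cdot, \cdot\})$ is a Lie algebra. For the parenthetical identity $R(\{x,y\}) = [R(x), R(y)]$, I would simply invoke part iii) of Lemma \ref{Lemma:preR}, namely equation (\ref{Rgraf}), which holds because $R$ satisfies (\ref{Rota}).

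Since every step is a direct specialization, there is no genuine obstacle here; the only point requiring care is the bookkeeping in the first paragraph, verifying that the four defining axioms of a BiHom-Lie algebra really do degenerate to the two axioms of a Lie algebra when $\alpha$ and $\beta$ are taken to be the identity. Once that dictionary is in place, the corollary is immediate.
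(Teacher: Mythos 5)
Your proposal is correct and follows exactly the paper's own route: the paper proves this corollary by invoking Proposition \ref{Proposition:BiHLie} with $\alpha =\beta =id_{L}$, which is precisely your specialization, and the identity $R(\{x,y\})=[R(x),R(y)]$ comes from part iii) of Lemma \ref{Lemma:preR} just as you say. Your added bookkeeping (verifying that the BiHom axioms degenerate to the ordinary Lie axioms when the structure maps are the identity) is a detail the paper leaves implicit, but the argument is the same.
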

\begin{proposition}
\label{Proposition:Leibniz} Let $(L,[\cdot ,\cdot ],\alpha ,\beta )$ be a left (respectively right)
BiHom-Leibniz algebra, $\lambda \in \Bbbk $ a fixed scalar and
$R:L\rightarrow L$ a Rota-Baxter operator of weight $\lambda $ such that
$R\circ \alpha =\alpha \circ R$ and $R\circ \beta =\beta \circ R$.  Define a
new multiplication on $L$ by
\begin{equation*}
\{x,y\}=[R(x),y]+[x,R(y)]+\lambda \lbrack x,y],\;\;\;\forall \;x,y\in L.
\end{equation*}%
Then $(L,\left\{\cdot , \cdot \right\} ,\alpha ,\beta )$ is a left (respectively right) BiHom-Leibniz algebra.
\end{proposition}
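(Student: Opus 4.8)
The plan is to follow verbatim the strategy of Proposition \ref{Proposition:BiHLie}, the only structural change being that the (non-cyclic) left BiHom-Leibniz identity (\ref{leftBHleibniz}) now plays the role that the BiHom-Jacobi condition played there, and that the skew-symmetry part of Lemma \ref{Lemma:preR} (part ii)) is not needed at all. I will treat the left-handed case and leave the right-handed one, which is entirely symmetric, to the reader. By Lemma \ref{Lemma:preR} i), the maps $\alpha $ and $\beta $ are multiplicative with respect to $\{ \cdot , \cdot \}$, so the only thing to check is that $\{ \cdot , \cdot \}$ satisfies (\ref{leftBHleibniz}), i.e.
\begin{equation*}
\{\alpha \beta (x), \{y, z\}\}=\{\{\beta (x), y\}, \beta (z)\}+\{\beta (y), \{\alpha (x), z\}\}.
\end{equation*}
Since $R$ is a Rota-Baxter operator of weight $\lambda $ commuting with $\alpha $ and $\beta $ (i.e.\ (\ref{Ralpha}) and (\ref{Rota}) hold), Lemma \ref{Lemma:preR} iii) gives the graft relation (\ref{Rgraf}), namely $R(\{x, y\})=[R(x), R(y)]$, which is the essential tool.

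Next I would expand the left-hand side. Unfolding the outer $\{ \cdot , \cdot \}$ by its definition, rewriting $R(\{y,z\})$ via (\ref{Rgraf}) as $[R(y),R(z)]$, and using (\ref{Ralpha}) to move $R$ through $\alpha \beta $, the term $\{\alpha \beta (x), \{y, z\}\}$ becomes a sum of seven brackets $[\alpha \beta (R(x)), [R(y), z]]$, $[\alpha \beta (R(x)), [y, R(z)]]$, $[\alpha \beta (x), [R(y), R(z)]]$, together with three terms carrying a factor $\lambda $ and one carrying $\lambda ^2$. In the same way each of the two summands on the right-hand side expands into seven brackets (fourteen in total), again after using (\ref{Rgraf}) and (\ref{Ralpha}) to push every $R$ inward.

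The matching step is then purely organizational: I would sort both sides according to the power of $\lambda $. The single $\lambda ^2$-term on the left, $\lambda ^2[\alpha \beta (x),[y,z]]$, equals the two $\lambda ^2$-terms on the right by one application of (\ref{leftBHleibniz}) to $(x,y,z)$. Each of the three $\lambda ^1$-terms on the left is turned into a sum of two right-hand $\lambda ^1$-terms by applying (\ref{leftBHleibniz}) respectively to $(R(x),y,z)$, $(x,R(y),z)$ and $(x,y,R(z))$; the resulting six brackets coincide exactly with the six $\lambda ^1$-terms on the right. Finally the three $\lambda ^0$-terms are handled by applying (\ref{leftBHleibniz}) to $(R(x),R(y),z)$, $(R(x),y,R(z))$ and $(x,R(y),R(z))$, producing the six $\lambda ^0$-terms on the right. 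Thus the identity is established by seven applications of (\ref{leftBHleibniz}), exactly as seven applications of BiHom-Jacobi were used in Proposition \ref{Proposition:BiHLie}.

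The only real obstacle here is bookkeeping: one must keep track of where each $R$ sits and that (\ref{Ralpha}) has been used to write $R(\alpha \beta (x))=\alpha \beta (R(x))$ and similarly for the inner arguments, so that the substituted triples feed correctly into (\ref{leftBHleibniz}). There is no conceptual difficulty and, in contrast to the BiHom-Lie case, no cyclic summation and no appeal to skew-symmetry is required, which makes the Leibniz argument in fact slightly shorter than that of Proposition \ref{Proposition:BiHLie}.
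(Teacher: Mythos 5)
Your proposal is correct and follows essentially the same route as the paper's proof: multiplicativity and BiHom-skew-symmetry issues are delegated to Lemma \ref{Lemma:preR}, the relation (\ref{Rgraf}) is used to rewrite $R(\{\cdot,\cdot\})$ as a bracket of $R$-images, both sides are expanded into seven (respectively fourteen) terms via (\ref{Ralpha}), and the identity is closed by exactly seven applications of (\ref{leftBHleibniz}). Your bookkeeping by powers of $\lambda$, with the explicit substitution triples $(x,y,z)$, $(R(x),y,z)$, $(x,R(y),z)$, $(x,y,R(z))$, $(R(x),R(y),z)$, $(R(x),y,R(z))$, $(x,R(y),R(z))$, is just a cleaner presentation of the matching that the paper leaves implicit in the phrase ``by using (\ref{leftBHleibniz}) 7 times.''
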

\begin{proof} We only prove the left-handed version, the right-handed one is similar
and left to the reader.
In view of Lemma \ref{Lemma:preR}, we only need to show that $\left\{
{\cdot , \cdot }\right\} $ satisfies (\ref{leftBHleibniz}). Note also that, by
Lemma \ref{Lemma:preR} again, $\left( \ref{Rgraf}\right) $ holds. We compute, for all
$x, y, z, t\in L$,
\begin{eqnarray*}
\{\alpha \beta (x),t\}&=&[R(\alpha \beta (x)),t]+[\alpha \beta
(x),R(t)]+\lambda \lbrack \alpha \beta (x),t], \\
\{\alpha \beta (x),\{y,z\}\} &=&[R(\alpha \beta (x)),[R(y),z]]+[R(\alpha
\beta (x)),[y,R(z)]]+[R(\alpha \beta (x)),\lambda \lbrack y,z]]\\
&&+[\alpha \beta (x),R(\{y,z\})]+\lambda \lbrack \alpha \beta (x),[R(y),z]]\\
&&+\lambda \lbrack \alpha \beta
(x),[y,R(z)]]+\lambda \lbrack \alpha \beta (x),\lambda \lbrack y,z]] \\
&\overset{\left( \ref{Rgraf}\right) }{=}&[R(\alpha \beta
(x)),[R(y),z]]+[R(\alpha \beta (x)),[y,R(z)]]+[R(\alpha \beta (x)),\lambda
\lbrack y,z]] \\
&&+[\alpha \beta (x),[R(y),R(z)]]+\lambda \lbrack \alpha \beta
(x),[R(y),z]]\\
&&+\lambda \lbrack \alpha \beta (x),[y,R(z)]]+\lambda \lbrack
\alpha \beta (x),\lambda \lbrack y,z]],
\end{eqnarray*}%
which, by using the fact that $R\circ \alpha =\alpha \circ R$ and $R\circ \beta =\beta \circ R$, may be written as
\begin{eqnarray*}
\{\alpha \beta (x),\{y,z\}\} &=&[\alpha \beta (R(x)),[R(y),z]]
+[\alpha \beta (R(x)),[y,R(z)]]
+[\alpha \beta (R(x)),\lambda \lbrack y,z]] \\
&&+[\alpha \beta (x),[R(y),R(z)]]
+\lambda \lbrack \alpha \beta (x),[R(y),z]] \\
&&+\lambda \lbrack \alpha \beta (x),[y,R(z)]]
+\lambda \lbrack \alpha \beta (x),\lambda \lbrack y,z]].
\end{eqnarray*}%

On the other hand, we have
\begin{eqnarray*}
\left\{ \beta (x),y\right\} &=&[R(\beta (x)),y]+[\beta (x),R(y)]+\lambda
\lbrack \beta (x),y], \\
\left\{ t,\beta (z)\right\} &=&[R(t),\beta (z)]+[t,R(\beta (z))]+\lambda
\lbrack t,\beta (z)], \\
\left\{ \left\{ \beta (x),y\right\} ,\beta (z)\right\} &=&[R(\left\{ \beta
(x),y\right\} ),\beta (z)]+ [[R(\beta (x)),y],R(\beta (z))]\\
&&+[[\beta (x),R(y)],R(\beta (z))]+[\lambda
\lbrack \beta (x),y],R(\beta (z))] \\
&&+\lambda \lbrack \lbrack R(\beta (x)),y],\beta (z)]+\lambda \lbrack
\lbrack \beta (x),R(y)],\beta (z)]+\lambda \lbrack \lambda \lbrack \beta
(x),y],\beta (z)] \\
&\overset{\left( \ref{Rgraf}\right) }{=}&[[R(\beta (x)),R(y)],\beta (z)]+
[[R(\beta (x)),y],R(\beta (z))]\\
&&+[[\beta (x),R(y)],R(\beta (z))]+[\lambda
\lbrack \beta (x),y],R(\beta (z))] \\
&&+\lambda \lbrack \lbrack R(\beta (x)),y],\beta (z)]+\lambda \lbrack
\lbrack \beta (x),R(y)],\beta (z)]+\lambda \lbrack \lambda \lbrack \beta
(x),y],\beta (z)]
\end{eqnarray*}%
and%
\begin{eqnarray*}
\left\{ \beta (y),t\right\} &=&[R(\beta (y)),t]+[\beta (y),R(t)]+\lambda
\lbrack \beta (y),t], \\
\left\{ \alpha (x),z\right\} &=&[R(\alpha (x)),z]+[\alpha (x),R(z)]+\lambda
\lbrack \alpha (x),z], \\
\left\{ \beta (y),\left\{ \alpha (x),z\right\} \right\} &=&[R(\beta
(y)),[R(\alpha (x)),z]]+[R(\beta (y)),[\alpha (x),R(z)]]\\
&&+[R(\beta
(y)),\lambda \lbrack \alpha (x),z]]+
[\beta (y),R(\left\{ \alpha (x),z\right\} )] \\
&&+\lambda \lbrack \beta (y),[R(\alpha (x)),z]]+\lambda \lbrack \beta
(y),[\alpha (x),R(z)]]+\lambda \lbrack \beta (y),\lambda \lbrack \alpha
(x),z]] \\
&\overset{\left( \ref{Rgraf}\right) }{=}&[R(\beta (y)),[R(\alpha
(x)),z]]+[R(\beta (y)),[\alpha (x),R(z)]]\\
&&+[R(\beta (y)),\lambda \lbrack
\alpha (x),z]]+
[\beta (y),[R(\alpha (x)),R(z)]]\\
&&+\lambda \lbrack \beta (y),[R(\alpha
(x)),z]]+\lambda \lbrack \beta (y),[\alpha (x),R(z)]]+\lambda \lbrack \beta
(y),\lambda \lbrack \alpha (x),z]].
\end{eqnarray*}%
So \\[2mm]
${\;\;\;}$
$\left\{ \left\{ \beta (x),y\right\} ,\beta (z)\right\} +\left\{ \beta
(y),\left\{ \alpha (x),z\right\} \right\}$
\begin{eqnarray*}
&=&[[R(\beta (x)),R(y)],\beta (z)]
+[[R(\beta (x)),y],R(\beta (z))]+[[\beta (x),R(y)],R(\beta (z))]\\
&&+[\lambda
\lbrack \beta (x),y],R(\beta (z))]
+\lambda \lbrack \lbrack R(\beta (x)),y],\beta (z)]+\lambda \lbrack
\lbrack \beta (x),R(y)],\beta (z)]\\
&&+\lambda \lbrack \lambda \lbrack \beta
(x),y],\beta (z)]+
[R(\beta (y)),[R(\alpha (x)),z]]+[R(\beta (y)),[\alpha (x),R(z)]]\\
&&+[R(\beta
(y)),\lambda \lbrack \alpha (x),z]]+
[\beta (y),[R(\alpha (x)),R(z)]]+\lambda \lbrack \beta (y),[R(\alpha
(x)),z]]\\
&&+\lambda \lbrack \beta (y),[\alpha (x),R(z)]]+\lambda \lbrack \beta
(y),\lambda \lbrack \alpha (x),z]],
\end{eqnarray*}%
i.e., by using again the fact that $R\circ \alpha =\alpha \circ R$ and $R\circ \beta =\beta \circ R$, \\[2mm]
${\;\;\;\;\;\;\;\;\;\;}$
$\left\{ \left\{ \beta (x),y\right\} ,\beta (z)\right\} +\left\{ \beta
(y),\left\{ \alpha (x),z\right\} \right\}$
\begin{eqnarray*}
&=&[[\beta (R(x)),R(y)],\beta (z)]+[\beta (R(y)),[\alpha (R(x)),z]]+ \\
&&+[[\beta (R(x)),y],\beta (R(z))]+[\beta (y),[\alpha (R(x)),R(z)]] \\
&&+\lambda \lbrack \lbrack \beta (R(x)),y],\beta (z)]+\lambda \lbrack \beta
(y),[\alpha (R(x)),z]] \\
&&+[[\beta (x),R(y)],\beta (R(z))]+[\beta (R(y)),[\alpha (x),R(z)]] \\
&&+\lambda \lbrack \lbrack \beta (x),R(y)],\beta (z)]+[\beta (R(y)),\lambda
\lbrack \alpha (x),z]] \\
&&+[\lambda \lbrack \beta (x),y],\beta (R(z))]+\lambda \lbrack \beta
(y),[\alpha (x),R(z)]] \\
&&+\lambda \lbrack \lambda \lbrack \beta (x),y],\beta (z)]+\lambda \lbrack
\beta (y),\lambda \lbrack \alpha (x),z]].
\end{eqnarray*}
By using 7 times (\ref{leftBHleibniz}), we obtain $\{\alpha \beta
(x),\{y,z\}\}=\left\{ \left\{ \beta (x),y\right\} ,\beta (z)\right\}
+\left\{ \beta (y),\left\{ \alpha (x),z\right\} \right\}$.
\end{proof}
\begin{corollary} \label{3.16}
Let $(L,[\cdot ,\cdot ],\alpha ,\beta )$ be a left (respectively right) BiHom-Lie algebra,
$\lambda \in \Bbbk $ a fixed scalar and
$R:L\rightarrow L$ a Rota-Baxter operator of weight $\lambda $ such that
$R\circ \alpha =\alpha \circ R$ and $R\circ \beta =\beta \circ R$.  Define a
new multiplication on $L$ by
\begin{equation*}
\{x,y\}=[R(x),y]+[x,R(y)]+\lambda \lbrack x,y],\;\;\;\forall \;x,y\in L.
\end{equation*}%
Then $(L,\left\{\cdot , \cdot \right\} ,\alpha ,\beta )$ is a left (respectively right) BiHom-Lie algebra.
\end{corollary}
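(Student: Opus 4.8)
The plan is to deduce this corollary directly from the two tools already in place, namely Proposition~\ref{Proposition:Leibniz} and part~ii) of Lemma~\ref{Lemma:preR}, so that essentially no new computation is required. The key observation is definitional: a left BiHom-Lie algebra is precisely a left BiHom-Leibniz algebra $(L, [\cdot ,\cdot ], \alpha , \beta )$ that in addition satisfies the BiHom-skew-symmetry condition (\ref{BHskewsym}). Hence, to conclude that $(L, \{\cdot ,\cdot \}, \alpha , \beta )$ is a left BiHom-Lie algebra, I would verify two separate points: first, that it is a left BiHom-Leibniz algebra, and second, that the new bracket $\{\cdot ,\cdot \}$ satisfies BiHom-skew-symmetry.

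For the first point, I would simply regard the given left BiHom-Lie algebra as a left BiHom-Leibniz algebra (forgetting skew-symmetry) and apply Proposition~\ref{Proposition:Leibniz} verbatim. The hypotheses there---$R$ a Rota-Baxter operator of weight $\lambda $ commuting with $\alpha $ and $\beta $, together with the same defining formula for $\{\cdot ,\cdot \}$---are exactly the ones assumed here, so that proposition yields that $(L, \{\cdot ,\cdot \}, \alpha , \beta )$ is a left BiHom-Leibniz algebra. This already supplies the commutativity $\alpha \circ \beta =\beta \circ \alpha $, the multiplicativity of $\alpha $ and $\beta $ with respect to $\{\cdot ,\cdot \}$ (which is part~i) of Lemma~\ref{Lemma:preR}), and the left BiHom-Leibniz identity (\ref{leftBHleibniz}) for $\{\cdot ,\cdot \}$.

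For the second point, I would invoke part~ii) of Lemma~\ref{Lemma:preR}: since the original bracket satisfies the skew-symmetry relation (\ref{skew}) (this being part of the left BiHom-Lie structure), that lemma guarantees $\{\beta (x), \alpha (y)\}=-\{\beta (y), \alpha (x)\}$ for all $x, y\in L$, which is precisely (\ref{BHskewsym}) for the new bracket. Combining the two points, $(L, \{\cdot ,\cdot \}, \alpha , \beta )$ is a left BiHom-Leibniz algebra satisfying BiHom-skew-symmetry, hence a left BiHom-Lie algebra. The right-handed case follows identically, using the right-handed version of Proposition~\ref{Proposition:Leibniz} and the same part~ii) of Lemma~\ref{Lemma:preR}, whose statement is symmetric in the roles of $\alpha $ and $\beta $.

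I do not expect any genuine obstacle, since all the real work has been front-loaded: the BiHom-Leibniz identity for $\{\cdot ,\cdot \}$ is established inside Proposition~\ref{Proposition:Leibniz} (by applying (\ref{leftBHleibniz}) seven times), while compatibility with the structure maps and skew-symmetry are checked in Lemma~\ref{Lemma:preR}. The only point requiring mild care is to confirm that the definition of left BiHom-Lie algebra in use really is ``left BiHom-Leibniz plus (\ref{BHskewsym})'', so that the two cited results assemble cleanly; this is immediate from the relevant definition.
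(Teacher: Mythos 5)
Your proposal is correct and is exactly the paper's own argument: the paper proves this corollary in one line by citing Proposition~\ref{Proposition:Leibniz} (to get the left/right BiHom-Leibniz structure for $\{\cdot ,\cdot \}$) together with part~ii) of Lemma~\ref{Lemma:preR} (to get BiHom-skew-symmetry), using the definition of a left/right BiHom-Lie algebra as a left/right BiHom-Leibniz algebra satisfying (\ref{BHskewsym}). Your write-up simply makes these same two steps explicit.
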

\begin{proof}
It follows by Proposition \ref{Proposition:Leibniz} in view
of ii) in Lemma \ref{Lemma:preR}.
\end{proof}
\section{BiHom-PostLie algebras}\label{sec6}
\setcounter{equation}{0}
The aim of this section is to prove that, in the case of bijective structure maps,
Propositions \ref{lrprelie} and \ref{Proposition:BiHLie} above admit a common generalization.
\begin{definition} (\cite{bai}, \cite{vallette}) A left PostLie algebra is a triple $(L, [\cdot , \cdot ], \circ )$, where $L$ is a linear space endowed with two bilinear operations $[\cdot , \cdot ]$ and
$\circ $ such that $(L, [\cdot , \cdot ])$ is a Lie algebra and the following relations are satisfied, for all $x, y, z\in L$:
\begin{eqnarray*}
&&z\circ (y\circ x)-y\circ (z\circ x)+(y\circ z)\circ x-(z\circ y)\circ x+[y, z]\circ x=0, \\
&&z\circ [x, y]-[z\circ x, y]-[x, z\circ y]=0.
\end{eqnarray*}
\end{definition}
\begin{definition} A left BiHom-PostLie algebra is a 5-tuple $(L, \{\cdot , \cdot \}, *, \alpha , \beta )$,
where $L$ is a linear space, $\{\cdot , \cdot \}$ and $*$ are bilinear operations on $L$, $\alpha , \beta :L\rightarrow L$
are two commuting linear maps such that $\alpha (\{x, y\})=\{\alpha (x), \alpha (y)\}$,
$\beta (\{x, y\})=\{\beta (x), \beta (y)\}$, $\alpha (x*y)=\alpha (x)*\alpha (y)$, $\beta (x*y)=\beta (x)*\beta (y)$,
for all $x, y\in L$, $(L, \{\cdot , \cdot \}, \alpha , \beta )$ is a left BiHom-Lie algebra and the following identities are
satisfied, for all $x, y, z\in L$:
\begin{eqnarray}
&&\alpha \beta (z)*(\alpha (y)* x)-\alpha \beta (y)* (\alpha (z)* x)+(\beta (y)* \alpha (z))*\beta (x)\nonumber \\
&&\;\;\;\;\;\;\;\;-(\beta (z)* \alpha (y))* \beta (x)+\{\beta (y), \alpha (z)\}* \beta (x)=0, \label{PostLie1}\\
&&\alpha \beta (z)* \{x, y\}-\{\beta (z)* x, \beta (y)\}-\{\beta (x), \alpha (z)* y\}=0.\label{PostLie2}
\end{eqnarray}
\end{definition}
\begin{proposition}
Let $(L, [\cdot , \cdot ], \circ )$ be a left PostLie algebra and $\alpha , \beta :L\rightarrow L$
two commuting linear maps such that $\alpha ([x, y])=[\alpha (x), \alpha (y)]$, $\beta ([x, y])=[\beta (x), \beta (y)]$,
$\alpha (x\circ y)=\alpha (x)\circ \alpha (y)$, $\beta (x\circ y)=\beta (x)\circ \beta (y)$, for all $x, y\in L$. Define two new
operations on $L$ by $\{x, y\}=[\alpha (x), \beta (y)]$ and $x*y=\alpha (x)\circ \beta (y)$, for all $x, y\in L$.
Then $(L, \{\cdot , \cdot \}, *, \alpha , \beta )$ is a left BiHom-PostLie algebra, called the Yau twist of  $(L, [\cdot , \cdot ], \circ )$.
\end{proposition}
\begin{remark}
Obviously, if the bracket $\{\cdot , \cdot \}$ in the definition of a left BiHom-PostLie algebra is trivial, then a left
BiHom-PostLie algebra is just a left BiHom-pre-Lie algebra.
\end{remark}

A proof for the next result may be found in \cite{adimi}.
\begin{proposition}\label{interm}
Let $(L, \{\cdot , \cdot \}, *, \alpha , \beta )$ be a left BiHom-PostLie algebra such that $\alpha $ and $\beta $ are
bijective. Define a new multiplication on $L$  by
\begin{eqnarray*}
&&<x, y>:=x*y-(\alpha ^{-1}\beta (y))*(\alpha \beta ^{-1}(x))+\{x, y\}, \;\;\; \forall \;\;x, y\in L.
\end{eqnarray*}
Then $(L, <\cdot , \cdot >, \alpha , \beta )$ is a BiHom-Lie algebra.
\end{proposition}

The next result is the BiHom analogue of Corollary 5.6 in \cite{bai}, we leave the proof to the reader.
\begin{proposition} \label{comgen}
Let $(L, [\cdot , \cdot ], \alpha , \beta )$ be a left BiHom-Lie algebra, $\lambda \in \Bbbk $ a fixed scalar and $R:L\rightarrow L$ a Rota-Baxter operator
of weight $\lambda $ such that $R\circ \alpha =\alpha \circ R$ and $R\circ \beta =\beta \circ R$. Define two new operations
on $L$ by
\begin{eqnarray*}
&&\{x, y\}:=\lambda [x, y] \;\;\;and\;\;\;x*y:=[R(x), y], \;\;\; \forall \;\;x, y\in L.
\end{eqnarray*}
Then $(L, \{\cdot , \cdot \}, *, \alpha , \beta )$ is a left BiHom-PostLie algebra.
\end{proposition}

Note that, by taking $\lambda =0$ in Proposition \ref{comgen}, we obtain Proposition \ref{lrprelie}. On the other hand,
if $\alpha $ and $\beta $ are bijective (and $\lambda $ is arbitrary), then Proposition
\ref{Proposition:BiHLie} (or, equivalently, Corollary \ref{3.16}) is a particular case of Proposition \ref{comgen}. Indeed,
to the left BiHom-PostLie algebra $(L, \{\cdot , \cdot \}, *, \alpha , \beta )$ obtained in Proposition \ref{comgen}
we apply Proposition \ref{interm}, and we obtain a BiHom-Lie algebra $(L, <\cdot , \cdot >, \alpha , \beta )$, where
the bracket $<\cdot , \cdot >$ is given by
\begin{eqnarray*}
<x, y>&=&x*y-(\alpha ^{-1}\beta (y))*(\alpha \beta ^{-1}(x))+\{x, y\}\\
&=&[R(x), y]-[R\alpha ^{-1}\beta (y), \alpha \beta ^{-1}(x)]+\lambda [x, y]\\
&=&[R(x), y]-[\beta \alpha ^{-1}R(y), \alpha \beta ^{-1}(x)]+\lambda [x, y]\\
&\overset{(\ref{BHskewsym})}{=}&[R(x), y]+[\beta \beta ^{-1}(x), \alpha \alpha ^{-1}R(y)]+\lambda [x, y]\\
&=&[R(x), y]+[x, R(y)]+\lambda [x, y],
\end{eqnarray*}
which is exactly the bracket in Proposition \ref{Proposition:BiHLie}.
\section{Conclusions}\label{sec7}
\setcounter{equation}{0}
This paper is a contribution to the study of Rota-Baxter operators on other types of algebras than associative and Lie algebras.
The starting point was the following question, formulated in our previous paper \cite{lmmp1}: if $(A, \cdot )$ is an algebra of a certain type, $R:A\rightarrow A$ is a Rota-Baxter operator of weight $\lambda $ and we
define a new multiplication on $A$ by $a\bullet b=R(a)\cdot b+a\cdot R(b)+\lambda a\cdot b$, for all $a, b\in A$, then under
what circumstances is $(A, \bullet )$ an algebra of the same type as $(A, \cdot )$? In \cite{lmmp1} we treated the case
of BiHom-associative algebras, while in this paper we investigated the case of BiHom-Lie algebras. The second aim of the paper
was to obtain a BiHom version of the result of Aguiar telling how to obtain a pre-Lie algebra from a Rota-Baxter operator of weight zero on a Lie algebra. It turned out that the BiHom version does not work for BiHom-Lie algebras, but for a new concept
we introduce in this paper, called  left BiHom-Lie algebra. It is an interesting phenomenon that the concepts of Lie algebra
and Hom-Lie algebra admit two different generalizations in the BiHom context, and we expect that the new concept of
left BiHom-Lie algebra, introduced here, will appear in many other situations. Actually, we expect that some results from
classical Lie theory may be extended to BiHom-Lie algebras, while others may be extended to left BiHom-Lie algebras.
For instance, there are indications that representation theory might work better for left BiHom-Lie algebras than for
BiHom-Lie algebras.

\begin{center}
ACKNOWLEDGEMENTS
\end{center}

This paper was written while Ling Liu was visiting the Institute of Mathematics of the Romanian Academy (IMAR),
supported by the NSF of China (Grant Nos. 11801515, 11601486); she would like to thank IMAR for hospitality.
Claudia Menini was a  member of the National Group for Algebraic
and Geometric Structures, and their Applications (GNSAGA-INdAM).

\bibliographystyle{amsplain}
\providecommand{\bysame}{\leavevmode\hbox to3em{\hrulefill}\thinspace}
\providecommand{\MR}{\relax\ifhmode\unskip\space\fi MR }
\providecommand{\MRhref}[2]{%
  \href{http://www.ams.org/mathscinet-getitem?mr=#1}{#2}
}
\providecommand{\href}[2]{#2}

\end{document}